
\documentclass{amsart}
\usepackage{amsmath,amssymb,epsfig,amscd,amsthm}
\usepackage[margin=1.5in]{geometry}
\usepackage{tikz}
\usetikzlibrary{arrows}
\usetikzlibrary{matrix}
\usepackage{graphicx}


\usepackage{verbatim}



\numberwithin{equation}{section}

\newtheorem{lemma}[equation]{Lemma}
\newtheorem{thm}[equation]{Theorem}
\newtheorem{conjecture}[equation]{Conjecture}
\newtheorem{cor}[equation]{Corollary}
\newtheorem{prop}[equation]{Proposition}

\newtheorem{question}[equation]{Question}

\newtheorem{defi}[equation]{Definition}

\theoremstyle{remark}


\renewcommand{\bar}[1]{#1\llap{$\overline{\phantom{\rm#1}}$}}

\newcommand{\lra}{\longrightarrow}

\newcommand{\bA}{{\mathbb A}}
\newcommand{\N}{{\mathbb N}}
\newcommand{\Z}{{\mathbb Z}}

\newcommand{\R}{{\mathbb R}}
\newcommand{\C}{{\mathbb C}}

\newcommand{\M}{{\mathcal{M}}}

\newcommand{\Qbar}{\bar{\mathbb{Q}}}

\renewcommand{\l}{\lambda}


\newcommand{\bff}{{\mathbf f}}
\newcommand{\bfg}{{\mathbf g}}

\newcommand{\cP}{\mathcal{P}}

\newcommand{\bP}{{\mathbb P}}

\newcommand{\cM}{\mathcal{M}}

\begin{document}



\title{A case of the Dynamical Andr\'e-Oort Conjecture}

\author{D.~Ghioca}
\address{
Dragos Ghioca\\
Department of Mathematics\\
University of British Columbia\\
Vancouver, BC V6T 1Z2\\
Canada
}
\email{dghioca@math.ubc.ca}

\author{H.~Krieger}
\address{
Holly Krieger\\
Department of Mathematics\\
Massachusetts Institute of Technology\\
77 Massachusetts Avenue\\
Cambridge, MA 02139\\
USA
}
\email{hkrieger@math.mit.edu}

\author{K.~Nguyen}
\address{
Khoa Nguyen
Department of Mathematics\\
University of California at Berkeley\\
Berkeley, CA 94720\\
USA}
\email{khoanguyen2511@berkeley.edu}

\keywords {Mandelbrot set, unlikely intersections in dynamics}
\subjclass[2010]{Primary 37F50; Secondary 37F05}
\thanks{The first author was partially supported by an NSERC Discovery Grant. The second author was partially supported by NSF grant DMS-1303770.}


 \begin{abstract}
We prove a special case of the Dynamical Andr\'e-Oort Conjecture formulated by Baker and DeMarco \cite{Matt-Laura}. For any integer $d\ge 2$, we show that for a rational plane curve $C$ parametrized  by $(t, h(t))$ for some non-constant polynomial $h\in\C[z]$, if there exist infinitely many points $(a,b)\in C(\C)$ such that both $z^d+a$ and $z^d+b$ are postcritically finite maps, then $h(z)=\xi z$ for a $(d-1)$-st root of unity $\xi$. As a by-product of our proof, we show that the Mandelbrot set is not the filled Julia set of any polynomial $g\in\C[z]$.
 \end{abstract}


\maketitle

\section{Introduction}
\label{intro}

Motivated by the results on unlikely intersection in arithmetic dynamics from \cite{Matt-Laura, prep, prep-2}, Baker and DeMarco \cite[Conjecture~1.4]{Matt-Laura-2} recently formulated a dynamical analogue (see Conjecture \ref{dynamical Andre-Oort conjecture}) of the Andr\'e-Oort Conjecture, characterizing the subvarieties of moduli spaces of dynamical systems which contain a Zariski dense subset of postcritically finite points.  A morphism $f:\bP^1\lra \bP^1$ of degree larger than $1$ is said to be \emph{postcritically finite (PCF)} if each of its critical points is preperiodic; that is, has finite forward orbit under iteration by $f$.  We prove  in Section~\ref{proof section} the following supporting case of the dynamical Andr\'e-Oort Conjecture.

\begin{thm}
\label{main result}
Let $d\ge 2$ be an integer, and let $h\in \C[z]$ be a non-constant polynomial. If there exist infinitely many $t\in \C$ such that both maps $z\mapsto z^d+t$ and $z\mapsto z^d+h(t)$ are PCF maps, then $h(z)=\zeta z$, where $\zeta$ is a $(d-1)$-st root of unity.
\end{thm}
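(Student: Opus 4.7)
The plan is to use arithmetic equidistribution of PCF parameters to derive a functional equation for the bifurcation Green function of the ``generalized Mandelbrot set'' $M_d=\{c\in\C:z^d+c\text{ has connected Julia set}\}$, and then to exploit the rigidity of $M_d$ to conclude.

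First, every PCF parameter of $z^d+c$ is algebraic over $\Q$, and the ``bifurcation canonical height'' $\hhat_d:\oQ\to\R_{\ge 0}$ attached to the marked critical point $0$ of $z^d+c$ vanishes exactly on PCF parameters. Its archimedean local component is the nonnegative subharmonic potential $G_{M_d}$, which vanishes on $M_d$ and satisfies $G_{M_d}(c)=d^{-1}\log|c|+O(1)$ at infinity. The hypothesis gives infinitely many common zeros of $\hhat_d$ and $\hhat_d\circ h$, so Northcott's theorem forces $[\Q(t):\Q]\to\infty$ along the sequence. Arithmetic equidistribution (Favre--Rivera-Letelier) together with a Yuan--Zhang-style ``equality of canonical heights sharing an infinite common zero set'' argument then yields $\hhat_d\circ h=\deg(h)\cdot\hhat_d$ on $\oQ$, and at the archimedean place
\[
G_{M_d}\bigl(h(t)\bigr)=\deg(h)\cdot G_{M_d}(t)\qquad\text{for all } t\in\C
\]
(the additive constant vanishes because $h$ is open and $M_d$ has nonempty interior, so a constant level set $\{G_{M_d}=c_0\}$ cannot contain the two-dimensional set $h(M_d)$ unless $c_0=0$).

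Second, since $G_{M_d}\ge 0$ with zero set $M_d$, this identity yields $h^{-1}(M_d)=M_d$. For $t\notin M_d$ the iterates satisfy $G_{M_d}(h^n(t))=\deg(h)^n G_{M_d}(t)\to\infty$, hence $h^n(t)\to\infty$; and $M_d$ is forward-invariant. Consequently, if $\deg(h)\ge 2$, then $M_d$ coincides with the filled Julia set of $h$.

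Third, I would rule this out via the rigidity statement advertised in the abstract: $M_d$ is not the filled Julia set of any polynomial of degree at least $2$. This is the technical heart of the paper. The strategy is to use the B\"ottcher uniformization $\Phi:\hat\C\setminus\overline{\D}\to\hat\C\setminus M_d$, under which $h$ would be conjugate to $w\mapsto\eta w^{\deg h}$ for some $|\eta|=1$; a careful analysis of the combinatorics of external rays, Misiurewicz parameters, and hyperbolic components of $M_d$ then produces a contradiction. With $\deg(h)=1$ in hand, write $h(z)=az+b$: boundedness of $M_d$ gives $|a|=1$; matching the B\"ottcher expansion at infinity (where $\Phi(w)\sim w$) gives $b=0$; and the well-known $\mu_{d-1}$-symmetry of $M_d$ (arising from the conjugation of $z^d+c$ by $z\mapsto\zeta z$ for $\zeta^{d-1}=1$) forces $a$ to be a $(d-1)$-st root of unity, completing the proof. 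The main obstacle is this rigidity step; the remainder flows naturally once the equidistribution machinery is in place.
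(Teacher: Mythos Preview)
Your overall architecture is the same as the paper's: use the equidistribution machinery behind \cite{prep-2} to deduce $h^{-1}(\mathcal{M}_d)=\mathcal{M}_d$, invoke Theorem~\ref{MnotJ} to eliminate $\deg h\ge 2$, and then classify the affine symmetries of $\mathcal{M}_d$. The only packaging difference is that the paper quotes Theorem~\ref{GHT} to get ``$z^d+t$ is PCF iff $z^d+h(t)$ is PCF'' and then argues topologically (PCF parameters accumulate exactly on $\partial\mathcal{M}_d$, so $\partial\mathcal{M}_d$ and then $\mathcal{M}_d$ are totally invariant), whereas you extract the archimedean Green's-function identity $G_{\mathcal{M}_d}\circ h=(\deg h)\,G_{\mathcal{M}_d}$ directly. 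These are two faces of the same equidistribution argument, and both are valid routes to $h^{-1}(\mathcal{M}_d)=\mathcal{M}_d$.

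Your treatment of the linear case, however, has a real gap. Writing $h(z)=az+b$, knowing only the leading asymptotic $\Phi(c)\sim c$ gives $a=\lambda$ for some $|\lambda|=1$ but says nothing about $b$; and your final sentence is logically backwards: the fact that $\mu_{d-1}$ \emph{acts} on $\mathcal{M}_d$ does not by itself force every rotational symmetry to lie in $\mu_{d-1}$. The paper (Proposition~\ref{affinesymm}) handles both points by expanding the inverse uniformizer $\Psi(z)=z+\sum_{m\ge 0} b_m z^{-m}$ and using Shimauchi's computation that $b_m=0$ for $0\le m<d-2$ while $b_{d-2}\ne 0$: comparing coefficients in $\Psi(\lambda z)=a\Psi(z)+b$ then forces $\lambda^{d-1}=1$ and (for $d>2$) $b=0$, with the residual case $d=2$, $\mu(z)=-z-1$ ruled out by inspection. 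You should replace your last two clauses with this coefficient argument.
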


As a by-product of our proof we show that the Mandelbrot set is not a filled Julia set for any polynomial $h\in\C[z]$; more generally, we show that no multibrot set is a filled Julia set.  The \emph{$d$-th multibrot set} $\mathcal{M}_d$ is the set of all $t\in\C$ with the property that the orbit of $0$ under the map $z\mapsto z^d+t$ is bounded (with respect to the usual archimedean absolute value); when $d=2$ we obtain the classical Mandelbrot set. The \emph{filled Julia set $K_h$} of a polynomial $h(z)$  is the set of all $z\in\C$ such that $|h^n(z)|$ is bounded independently of $n\in\N$, where $h^n(z)$ is the $n$-th iterate of the map $z\mapsto h(z)$. We prove:

\begin{thm} \label{MnotJ} For each $d\ge 2$, there does not exist a polynomial $h(z)$ whose filled Julia set is $\mathcal{M}_d$.
\end{thm}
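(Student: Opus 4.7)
My plan is to derive Theorem~\ref{MnotJ} from Theorem~\ref{main result} by contradiction. Assume that $K_h = \M_d$ for some polynomial $h \in \C[z]$; since $\M_d$ is bounded with nonempty interior, one checks at once that $e := \deg h \geq 2$ (a polynomial of degree $\leq 1$ has filled Julia set equal to $\C$, a single point, or the empty set). The overall strategy is to produce infinitely many $t \in \C$ for which both $z^d+t$ and $z^d+h(t)$ are PCF, whereupon Theorem~\ref{main result} forces $h(z) = \zeta z$ with $\zeta^{d-1}=1$, contradicting $e\ge 2$.

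To set up the machinery, I would first compare Green's functions. The escape rate $G_d(t) = \lim_n d^{-n}\log^+|f_t^n(0)|$ of the family $f_t(z)=z^d+t$ satisfies $dG_d(t) = \log|t|+o(1)$ as $|t|\to\infty$ and vanishes precisely on $\M_d$, so $dG_d$ is the Green's function of $\M_d$ with the standard normalization at infinity; likewise the dynamical escape rate $G_h$ is the Green's function of $K_h$. From $K_h = \M_d$, uniqueness gives $G_h = dG_d$, and combining with $G_h\circ h = eG_h$ yields the functional equation $G_d\circ h = eG_d$ on $\C$. Passing to the Douady--Hubbard Riemann map $\Phi:\C\setminus\M_d\to\C\setminus\overline{\D}$ normalized by $\Phi(t)/t\to 1$ at infinity, this becomes $\Phi(h(t)) = a_e\,\Phi(t)^e$, where $a_e$ is the leading coefficient of $h$; matching leading terms at infinity forces $|a_e|=1$. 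On external rays this reads $h(R_\theta) = R_{e\theta + c}$, where $a_e = e^{2\pi ic}$.

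The heart of the argument is to promote $|a_e|=1$ to $c\in\Q$, i.e.\ $a_e$ a root of unity. I would pick a pinch point $\alpha\in\partial\M_d$---a point at which at least two external rays land---that is not a critical point of $h$; such $\alpha$ exist in abundance (for instance, the root points of primitive hyperbolic components form an infinite family), while $h$ has only finitely many critical points. Near such $\alpha$, $h$ is a local biholomorphism, so $h(\alpha)\in\partial\M_d$ is again a pinch point, with rays $R_{e\theta_i+c}$ landing at $h(\alpha)$ for the angles $\theta_i$ of rays at $\alpha$. By the Douady--Hubbard theory of the multibrot set, pinch points of $\M_d$ occur precisely as landing points of two or more rational external rays, so each $e\theta_i+c$ is rational, forcing $c\in\Q$.

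Once $a_e$ is a root of unity, $h$ maps rational external rays to rational ones, and an elementary count on the angle dynamics $\theta\mapsto d\theta$ on $\R/\Z$ yields infinitely many strictly preperiodic rational $\theta$ for which $e\theta+c$ is also strictly preperiodic. The corresponding landing points are Misiurewicz parameters, hence PCF, and their images under $h$ are also Misiurewicz parameters. This supplies the infinitely many PCF pairs needed to invoke Theorem~\ref{main result} and complete the contradiction. The hard part will be the rationality step: it rests on the Douady--Hubbard description of pinch points of $\M_d$ as landing points of rational rays, a substantial structural input that is classical for $\M_2$ and may require additional justification (or an alternative argument, e.g.\ via Galois-finiteness of $\{h : K_h = \M_d,\ \deg h = e\}$ combined with a Kronecker-type rigidity) when $d\ge 3$.
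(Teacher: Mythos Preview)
Your proposal is circular relative to the paper's logical structure: Theorem~\ref{main result} is proved in Section~\ref{proof section} \emph{by invoking} Theorem~\ref{MnotJ} to dispose of the case $\deg h\ge 2$. Thus deducing Theorem~\ref{MnotJ} from Theorem~\ref{main result} assumes what is to be shown. The paper instead establishes Theorem~\ref{MnotJ} directly, by two independent routes: in Section~\ref{Mandelbrot section} via the combinatorics of landing pairs of parameter rays (Proposition~\ref{sector}) combined with Bang's primitive-prime-divisor theorem, and in Section~\ref{algebraic} by observing that the main hyperbolic component $H_d$ is a polynomial image of a disk and then applying the Medvedev--Scanlon classification through Theorem~\ref{thm:1st}.

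Even granting an independent proof of Theorem~\ref{main result}, the step you yourself flag as the hard part---that every biaccessible point of $\partial\M_d$ has only \emph{rational} external angles, so that the image pinch point $h(\alpha)$ forces $c\in\Q$---is a deep structural statement about the multibrot set, of weight comparable to what the paper proves from scratch. For $d=2$ this is a substantial theorem from the Douady--Hubbard/Schleicher theory; for $d\ge 3$ it is not obviously available, and the ray combinatorics one would need are exactly the sort developed in Proposition~\ref{sector}. Your suggested fallback via Galois-finiteness of $\{h:K_h=\M_d,\ \deg h=e\}$ is also not free: it needs the Schmidt--Steinmetz classification plus Proposition~\ref{affinesymm}, and even then extracting that $a_e$ is a root of unity requires an additional argument. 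In short, the proposal does not bypass the work the paper does; it relocates it, and adds a logical loop.
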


Theorem~\ref{MnotJ} was widely believed to be true in the complex dynamics community, and intuitively \emph{seems} obvious for various reasons; for example, one expects a filled Julia set to have a stricter self-similarity than a multibrot set. To our knowledge, however, there is no published proof, nor there was known for specialists a \emph{clear} argument for proving Theorem~\ref{MnotJ}.  We provide two proofs for Theorem \ref{MnotJ}.  In Section~\ref{Mandelbrot section}, we use classical results in complex dynamics to deduce the result. In particular, we prove precise results regarding the landing pairs of parameter rays on the $d$-th multibrot set (see Proposition~\ref{sector}). In the proof of Theorem~\ref{MnotJ} from Section~\ref{Mandelbrot section} we also use an old result of Bang \cite{Bang} regarding the existence of primitive prime divisors in arithmetic sequences. In Section~\ref{algebraic}, we provide a primarily algebraic proof of Theorem~\ref{MnotJ}, which is shorter, but relies on the deep no-wandering-domains theorem of Sullivan \cite{Sullivan} and the delicate classification theory of invariant curves of $\mathbb{P}^1 \times \mathbb{P}^1$ done by Medvedev and Scanlon \cite{Medvedev-Scanlon}. In Theorem~\ref{thm:1st} we prove that if a polynomial $g(z)$ has a Fatou component which is the image of the open unit disk under another polynomial map, then the Julia set of $g$ is a circle, and so $g(z)=az^k+b$ for some $a,b\in\C$ (according to \cite{Schmidt-Steinmetz}). Since the main hyperbolic component of $\M_d$ is easily seen to be the image of the open unit disk under a polynomial map, we obtain the conclusion of Theorem~\ref{MnotJ}. We also discuss in Section~\ref{algebraic} additional related problems, and we state a general question regarding possible algebraic relations between the Fatou components (respectively between the Julia sets) of two polynomials (see Question~\ref{q:1st}).  Given the contrasting approaches and backgrounds required for our two proofs, and also given the intrinsic interests in both approaches, we include both for the benefit of the reader.

The connection between Theorem~\ref{main result} and Theorem~\ref{MnotJ} is made in Section~\ref{proof section} through a result on unlikely intersections in dynamics proven in \cite[Theorem~1.1]{prep-2}. Namely, we use \cite{prep-2} to show that under the assumptions of Theorem \ref{main result}, the set
$$\cP:=\{t\in\C\colon z^d+t \text{ is a PCF map}\}$$
is totally invariant under the polynomial $h$.  When $\text{deg}(h) \geq 2,$ we deduce a contradiction to Theorem \ref{MnotJ}; if $\text{deg}(h) = 1$, a simple complex-analytic argument yields $h(t) = \zeta t$ for some $(d-1)$-st root of unity $\zeta$ (see Proposition~\ref{affinesymm}).

Motivating our study is a common theme for many outstanding conjectures in arithmetic geometry: given a quasiprojective variety $X$, one defines the concepts of \emph{special points} and \emph{special subvarieties} of $X$. Then the expectation is that whenever $Y\subseteq X$ contains a Zariski dense set of special points, the subvariety $Y$ must be itself special. For example, if $X$ is an abelian variety and we define as \emph{special points} the torsion points of $X$, while we define as \emph{special subvarieties} the torsion translates of abelian subvarieties of $X$, we obtain the Manin-Mumford conjecture (proved by Raynaud \cite{Ray1, Ray2}).

The Andr\'e-Oort Conjecture follows the same principle outlined above. We present the conjecture in the case when the ambient space is the affine plane. In this case, the conjecture was proven first by  Andr\'e \cite{Andre} and then later generalized by Pila \cite{Pila} to mixed Shimura varieties (in which the ambient space is a product of curves which are either a modular curve, an elliptic curve, or the multiplicative group). 

\begin{thm}[Andr\'e \cite{Andre}, Pila \cite{Pila}]
\label{Pila's theorem}
Let $C\subset \bA^2$ be a complex plane curve containing infinitely many points $(a,b)$ with the property that the elliptic curves with $j$-invariant equal to $a$, respectively $b$ are both CM elliptic curves. Then $C$ is either horizontal, or vertical, or it is the zero set of a modular polynomial. 
\end{thm}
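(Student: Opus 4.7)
The plan is to follow Pila's strategy, which has become the standard route for problems of this shape. Recall that $(a,b) \in \bA^2$ is a special point precisely when $a = j(\tau_1)$ and $b = j(\tau_2)$ for imaginary quadratic $\tau_1,\tau_2 \in \mathbb{H}$, and the special curves are the horizontal and vertical lines through a CM $j$-value, together with the irreducible components of the zero loci of the classical modular polynomials $\Phi_N(X,Y)$. I argue by contrapositive: assuming $C$ is none of these, the goal is to show that $C$ contains only finitely many CM points.

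The first ingredient is a Galois-theoretic lower bound. If $(a,b) = (j(\tau_1), j(\tau_2))$ is a CM point on $C$, where $\tau_i$ has discriminant $D_i$, then $\Q(a,b)$ contains the ring class fields of the corresponding orders, so $[\Q(a,b):\Q] \gg \max(h(D_1), h(D_2))$. By Siegel's lower bound on class numbers, $h(D) \gg_\ve |D|^{1/2 - \ve}$, so each CM point on $C$ generates a Galois orbit of cardinality $\gg |D|^{1/2-\ve}$, all of whose members lie on $C$.

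For the matching upper bound one pulls back to $\mathbb{H}^2$. Fix a fundamental domain $\mathcal{F} \subset \mathbb{H}$ for $\mathrm{SL}_2(\Z)$ and set $\tilde{C} := (j \times j)^{-1}(C) \cap (\mathcal{F} \times \mathcal{F})$. A theorem of Peterzil-Starchenko makes $j|_{\mathcal{F}}$ definable in the o-minimal structure $\mathbb{R}_{\mathrm{an},\exp}$, so $\tilde{C}$ is a definable set. A CM point of discriminant $D$ has a representative in $\mathcal{F}$ which is a quadratic irrational of naive height polynomial in $|D|$, and the Pila-Wilkie counting theorem bounds the number of algebraic points of bounded degree and height $\le H$ on the transcendental part $\tilde{C} \setminus \tilde{C}^{\mathrm{alg}}$ by $O_\ve(H^\ve)$. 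Comparing against the Galois lower bound forces $\tilde{C}^{\mathrm{alg}}$ to contain a positive-dimensional semialgebraic subvariety passing through infinitely many CM preimages, provided $C$ has infinitely many CM points.

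The final and hardest step is the modular Ax-Lindemann-Weierstrass theorem, also due to Pila: any irreducible positive-dimensional semialgebraic subvariety of $(j \times j)^{-1}(C)$ must be of the form $\{(\tau, g\tau) : \tau \in \mathbb{H}\}$ for some $g \in \mathrm{GL}_2^+(\Q)$; its image under $j \times j$ is then a component of $\Phi_N(X,Y) = 0$ for the associated $N$, contradicting our assumption that $C$ is not modular. The main obstacle will be precisely this functional-transcendence statement, together with the supporting definability of $j$ in $\mathbb{R}_{\mathrm{an},\exp}$: these are the technical heart of the modern approach, and historically they are what allowed Pila's method to succeed after Andr\'e's original 1998 proof, which treated the plane case by more classical means (Diophantine approximation combined with Masser-W\"ustholz isogeny estimates).
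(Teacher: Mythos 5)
The paper does not prove this theorem; it states it as background, citing Andr\'e \cite{Andre} for the original plane case and Pila \cite{Pila} for the generalization to products of modular curves, elliptic curves, and tori, and uses it only to motivate Conjecture~\ref{dynamical Andre-Oort conjecture}. So there is no proof in the paper against which to compare your argument. Taken on its own terms, your sketch is a faithful outline of Pila's o-minimal strategy: the Siegel/Landau lower bound on class numbers gives $\gg_\ve |D|^{1/2-\ve}$ Galois conjugates on $C$; Peterzil--Starchenko definability of $j$ restricted to the standard fundamental domain in $\R_{\mathrm{an},\exp}$ plus the Pila--Wilkie theorem for algebraic points of bounded degree give the competing $O_\ve(H^\ve)$ upper bound off the algebraic part; and the modular Ax--Lindemann theorem identifies the algebraic part with preimages of weakly special curves. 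One small imprecision: the Ax--Lindemann classification of weakly special subvarieties of $\mathbb{H}^2$ also includes the fibers $\{\tau_0\}\times\mathbb{H}$ and $\mathbb{H}\times\{\tau_0\}$, not just graphs of elements of $\mathrm{GL}_2^+(\Q)$, so you should say explicitly that these project to vertical/horizontal lines and are therefore excluded by your hypothesis on $C$. You correctly note that Andr\'e's 1998 proof is different in character (linear forms in logarithms and isogeny estimates rather than o-minimality); since the paper attributes the plane case to Andr\'e, that route would be equally legitimate to cite here.
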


Motivated by the Andr\'e-Oort Conjecture, Baker and DeMarco \cite{Matt-Laura-2} formulated a dynamical version of the Andr\'e-Oort Conjecture where the ambient space is the moduli space of rational maps of degree $d$, and the special points are represented by the PCF maps. The postcritically finite (PCF) maps are crucial in the study of dynamics, but also analogous to CM points in various ways. The PCF points form a countable, Zariski dense subset in the dynamical moduli space, and over a given number field, there are only finitely many non-exceptional PCF maps of fixed degree (see \cite{BIJL} for proof, and definition of non-exceptional).  Additionally, Jones \cite{Jones} has shown that the arboreal Galois representation associated to a rational map defined over a number field $K$ will have small (of infinite index) image for PCF maps despite the image being generally of finite index, analogous to the situation for the $\ell$-adic Galois representation associated to an elliptic curve over $K$, according to whether the curve has CM or not.  We state below a special case of  \cite[Conjecture~1.4]{Matt-Laura-2}, which is much closer in spirit to Theorem~\ref{Pila's theorem} and avoids some of the technical assumptions stated in \cite[Conjecture~1.4]{Matt-Laura-2}. 

\begin{conjecture}
\label{dynamical Andre-Oort conjecture}
Let $C\subset \bA^2$ be a complex plane curve with the property that it contains infinitely many points $(a,b)$ with the property that both $z^2+a$ and $z^2+b$ are PCF maps. Then $C$ is either  horizontal, or vertical, or it is the diagonal map.
\end{conjecture}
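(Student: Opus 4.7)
My plan is to convert the hypothesis---infinitely many common PCF pairs on $C$---into a rigid dynamical relation between the two coordinate projections $\pi_1,\pi_2\colon C\to\bA^1$, and then classify the curves admitting such a relation. Throughout I would work in the parameter space of the family $z\mapsto z^2+c$, whose bifurcation measure $\mu$ is supported on $\partial\M_2$ and with respect to which PCF parameters are known to equidistribute.

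First, I would try to apply the unlikely-intersections result of \cite{prep-2}: from infinitely many common PCF pairs on $C$, I would deduce that the set $\cP:=\{t\in\C\colon z^2+t\text{ is PCF}\}$ is preserved by the correspondence on $\bA^1$ cut out by $C$. In the special case of Theorem~\ref{main result}, where $C=\{(t,h(t))\}$ for a polynomial $h$, this correspondence is simply $h$, and the stronger conclusion is that $\cP$ is \emph{totally} invariant under $h$.

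Having reduced to a pointwise invariance statement, I would split on $\deg h$. If $\deg h\geq 2$, then a bounded, totally $h$-invariant set carrying the bifurcation current should be forced to coincide with the filled Julia set $K_h$, yielding $\M_2=K_h$ and directly contradicting Theorem~\ref{MnotJ}. If $\deg h=1$, then $h$ is an affine symmetry of $\M_2$; a short B\"ottcher-coordinate argument at infinity, where $\M_2$ admits a canonical conformal uniformization, should force $h(t)=\zeta t$ with $\zeta$ a $(d-1)$-st root of unity, which in the $d=2$ conjecture reduces to $h(t)=t$ (the diagonal).

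The main obstacle for the full conjecture is the passage from polynomial graphs to arbitrary algebraic curves $C$: there is then no distinguished polynomial $h$, and one must instead classify pairs $\pi_1,\pi_2\colon C\to\bA^1$ whose bifurcation-measure pullbacks $\pi_i^*\mu$ coincide. This appears to require a substantial extension of the Medvedev--Scanlon classification of invariant curves for products of polynomials, adapted now to a purely parameter-theoretic setting for the family $z^2+c$. I expect this to be the hard step, requiring genuinely new ideas beyond the unlikely-intersections machinery that suffices in the polynomial-graph case.
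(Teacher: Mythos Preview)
The statement you are addressing is Conjecture~\ref{dynamical Andre-Oort conjecture}, and the paper does \emph{not} prove it: it is stated as an open conjecture, and only the special case where $C$ is the graph of a polynomial $h$ is established, as Theorem~\ref{main result}. So there is no ``paper's own proof'' of this statement to compare against in full, and your proposal is not a proof of the conjecture either---as you yourself acknowledge in your final paragraph.

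For the polynomial-graph case, your outline matches the paper's argument essentially step for step: apply \cite{prep-2} (the paper's Theorem~\ref{GHT}) to conclude that $\bff_t$ is PCF iff $\bfg_t$ is PCF; pass from this to total invariance of $\partial\M_d$ (and hence of $\M_d$) under $h$; for $\deg h\geq 2$ deduce $\M_d=K_h$ and invoke Theorem~\ref{MnotJ}; for $\deg h=1$ invoke the affine-symmetry classification (Proposition~\ref{affinesymm}). One small difference: you phrase the passage from ``same PCF locus'' to ``$\partial\M_d$ is totally invariant'' via equidistribution against the bifurcation measure, whereas the paper uses the more elementary topological characterization that $\partial\M_d$ is exactly the set of accumulation points of PCF parameters, together with continuity of $h$ and the open mapping theorem. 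Either route works.

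Your assessment of the remaining obstacle is accurate and agrees with the paper's own discussion in the introduction: for a general curve $C$ there is no polynomial $h$ to which one can apply Theorem~\ref{MnotJ}, and both the unlikely-intersections input (extending Theorem~\ref{GHT}) and the rigidity step (an analogue of Theorem~\ref{MnotJ} for correspondences) are genuinely open. This is not a gap in your reasoning so much as a correct recognition that the conjecture is, at present, out of reach by these methods.
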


The Andr\'e-Oort Conjecture fits also into another more general philosophy common for several major problems in arithmetic geometry which says that each \emph{unlikely (arithmetic)  intersection} occuring more often than expected must be explained by a geometric principle. Typical for this principle of unlikely intersections is the Pink-Zilber Conjecture (which is, in turn, a generalization of the Manin-Mumford Conjecture). Essentially, one expects that the intersection of a subvariety $V$ of a semiabelian variety $X$  with the union of all algebraic subgroups of $X$ of codimension larger than $\dim(V)$ is not Zariski dense in $V$, unless $V$ is contained in a proper algebraic subgroup of $X$; for more details on the Pink-Zilber Conjecture, see the beautiful book of Zannier \cite{Zannier-book}. Taking this approach, Masser and Zannier \cite{M-Z-1, M-Z-2} proved that given any two sections $S_1, S_2:\bP^1\lra E$ on an elliptic surface $\pi:E\lra \bP^1$, if there exist infinitely many $\l\in \bP^1$ such that both $S_1(\l)$ and $S_2(\l)$ are torsion on the elliptic fiber $E_\l:=\pi^{-1}(\l)$, then $S_1$ and $S_2$ are  linearly dependent over $\Z$ as points on the generic fiber of $\pi$. For a general conjecture extending both the Pink-Zilber and the Andr\'e-Oort conjectures, see \cite{Pink}.

At the suggestion of Zannier, Baker and DeMarco \cite{Matt-Laura} studied a dynamical analogue of the above problem of simultaneous torsion in families of elliptic curves. The main result of \cite{Matt-Laura} is to show that given two complex numbers $a$ and $b$, and an integer $d\ge 2$, if there exist infinitely many $\l\in\C$ such that both $a$ and $b$ are preperiodic under the action of $z\mapsto z^d+\l$, then $a^d=b^d$. The result of Baker and DeMarco may be viewed as the first instance of the unlikely intersection problem in arithmetic dynamics. Later more general results were proven for arbitrary families of polynomials \cite{prep} and for families of rational maps \cite{prep-2}. The proofs from \cite{Matt-Laura, prep, prep-2} use powerful equidistribution results for points of small height (see \cite{Baker-Rumely06, CL, favre-rivera06, Yuan, YZ}). Conjecture~\ref{dynamical Andre-Oort conjecture} is yet another statement of the principle of unlikely intersections in algebraic dynamics.
 
Theorem~\ref{main result} yields that Conjecture~\ref{dynamical Andre-Oort conjecture} holds for all plane curves of the form $y=h(x)$ (or $x=h(y)$) where $h\in \C[z]$. One may attempt to attack the general Conjecture~\ref{dynamical Andre-Oort conjecture} along the same lines. However, there are significant difficulties to overcome. First of all, it is not easy to generalize the unlikely intersection result of \cite[Theorem~1.1]{prep-2} (see also Theorem~\ref{GHT}). This is connected with another deep problem in arithmetic geometry which asks how smooth is the variation of the canonical height of a point across the fibers of an algebraic family of dynamical systems (see \cite[Section~5]{prep-2} for a discussion of this connection to the works of Tate \cite{Tate} and Silverman \cite{Silverman83, silverman94-1, silverman94-2}). It is also unclear how to generalize Theorem~\ref{MnotJ}; in Section~\ref{algebraic} we discuss some possible extensions of Theorem~\ref{MnotJ} (see Question~\ref{MnotJ conjecture}).

\medskip

\emph{Acknowledgments.}  We thank the American Institute of Mathematics (AIM) and the  organizers of the workshop ``Postcritically finite maps in complex and arithmetic dynamics'' hosted by the AIM where the collaboration for this project began. We are grateful to Patrick Ingram who asked Conjecture \ref{dynamical Andre-Oort conjecture} during the aforementioned workshop. We thank Thomas Tucker for several useful discussions regarding this project. Last, but definitely not least, we are indebted to  Laura DeMarco for her very careful reading of a previous version of our paper and for suggesting numerous improvements.

\section{Proof of Theorem~\ref{main result}}
\label{proof section}

In this Section, we deduce Theorem~\ref{main result} from Theorem~\ref{MnotJ} combined with two other results. The first of the two results we need (see Theorem~\ref{GHT}) is proven in \cite[Theorem~1.1]{prep-2} in higher generality than the one we state here, and it is a consequence of the powerful equidistribution results of Yuan and Zhang \cite{Yuan, YZ} combined with a complicated analysis of the variation of the canonical height in an algebraic family of morphisms. The result we will use in the proof of Theorem~\ref{main result} is \cite[Theorem~1.1]{prep-2} stated for   polynomial families over the base curve $\bP^1$.

\begin{thm} \label{GHT} Let $f\in\Qbar[z]$ be a polynomial of degree $d\ge 2$, and let $P,Q\in\Qbar[z]$ be nonconstant polynomials. We let $\bff_t(z):=f(z)+P(t)$ and $\bfg_t(z):=f(z)+Q(t)$ be two families of polynomial mappings, and we let $a,b\in\Qbar$. If there exist infinitely many $t\in\Qbar$ such that both $a$ is preperiodic for $\bff_t$ and $b$ is preperiodic for $\bfg_t$, then for each $t\in\Qbar$ we have that $a$ is preperiodic for $\bff_t$ if and only if $b$ is preperiodic for $\bfg_t$.
\end{thm}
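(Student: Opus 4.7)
The plan is to apply the equidistribution framework for points of small height developed by Baker--Rumely, Chambert-Loir, Favre--Rivera-Letelier, Yuan, and Yuan--Zhang, in the form adapted to families of polynomial dynamical systems by Baker--DeMarco and the subsequent works \cite{Matt-Laura, prep, prep-2}. The central objects are the two parameter height functions
\[
H_a(t) := \hhat_{\bff_t}(a), \qquad H_b(t) := \hhat_{\bfg_t}(b),
\]
where $\hhat_{\bff_t}$ denotes the Call--Silverman canonical height attached to the polynomial $\bff_t$. These are nonnegative real-valued functions on $\bP^1(\Qbar)$ with the defining property that $H_a(t)=0$ (resp.\ $H_b(t)=0$) precisely when $a$ is preperiodic for $\bff_t$ (resp.\ $b$ is preperiodic for $\bfg_t$). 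The assertion of the theorem is thus that the zero loci of $H_a$ and $H_b$ are equal.

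First I would, following Tate, Silverman, and Call--Silverman, upgrade $H_a$ and $H_b$ to heights associated to adelically metrized line bundles $L_a$ and $L_b$ on the parameter $\bP^1$. At each place $v$ of a number field over which $f,P,Q,a,b$ are defined, the local dynamical Green's function
\[
G_v(t,a) := \lim_{n\to\infty} (\deg f)^{-n} \log^+ |\bff_t^n(a)|_v
\]
provides the local metric, and its Laplacian is a nonnegative measure $\mu_{a,v}$ on the Berkovich $v$-adic parameter space; at archimedean places $\mu_{a,v}$ is the bifurcation measure attached to the marked point $a$. The nonconstancy of $P$ guarantees that $L_a$ has positive degree, so that $H_a$ is a genuine Weil-type height (hence satisfies Northcott). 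An analogous story produces $L_b$, $G_v(t,b)$, and $\mu_{b,v}$.

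Second, the hypothesis that infinitely many $t\in\Qbar$ satisfy both $H_a(t)=0$ and $H_b(t)=0$ produces, for each height, an infinite sequence of parameters with heights tending to zero. The Yuan--Zhang arithmetic equidistribution theorem then forces the adelic metrics defining $H_a$ and $H_b$ to coincide, after normalization by the degrees of $L_a$ and $L_b$; in particular the measures $\mu_{a,v}$ and $\mu_{b,v}$ are proportional at every place $v$. Consequently the difference $(\deg L_b)\,H_a-(\deg L_a)\,H_b$ is a height attached to a line bundle carrying the trivial adelic metric, so it is bounded on $\bP^1(\Qbar)$; Northcott together with the existence of a common zero in our infinite collection forces this bounded difference to be identically zero. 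Therefore $H_a$ and $H_b$ have the same zero set, which is exactly the claimed equivalence.

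The step I expect to be hardest is the input about the variation of the Call--Silverman canonical height in the family: namely, that $H_a$ and $H_b$ really are heights attached to adelically semipositive metrized line bundles, rather than merely formal quasi-heights. Verifying this for arbitrary polynomial families over $\bP^1$ requires delicate control of the Green's functions at places of bad reduction and a uniform treatment across $v$, which is precisely the technical heart of \cite[Theorem~1.1]{prep-2}. Once that input is secured, the equidistribution-plus-Northcott argument sketched above proceeds in a fairly standard way. In our restricted setting $\bff_t(z)=f(z)+P(t)$, $\bfg_t(z)=f(z)+Q(t)$, one might hope to simplify some of the local analysis by exploiting that the $t$-dependence enters only through the constant term, but the core difficulty --- promoting the pointwise canonical height to a global adelic object --- remains the same.
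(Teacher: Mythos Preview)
The paper does not supply its own proof of this theorem: it is quoted verbatim as a special case of \cite[Theorem~1.1]{prep-2}, with only the one-sentence description that it ``is a consequence of the powerful equidistribution results of Yuan and Zhang \cite{Yuan, YZ} combined with a complicated analysis of the variation of the canonical height in an algebraic family of morphisms.'' Your sketch is precisely an outline of that cited argument, and you correctly identify its two pillars (promoting the parameter canonical heights $H_a,H_b$ to heights for adelically semipositive metrized line bundles, then invoking arithmetic equidistribution) as well as which of the two is the genuinely hard input.

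One point in your outline deserves tightening. After equidistribution gives proportionality of the measures $\mu_{a,v}$ and $\mu_{b,v}$ at every place, you pass to ``$(\deg L_b)H_a-(\deg L_a)H_b$ is bounded'' and then to ``identically zero'' via Northcott plus a common zero. That last implication is not quite right as stated: a bounded function with a zero need not vanish identically. The standard way to close this gap is local: equality of the measures forces the normalized local Green's functions to differ by a harmonic function on the Berkovich parameter space, which by boundedness must be a constant $c_v$; the product formula (or, equivalently, evaluating at any one of the infinitely many common preperiodic parameters) forces $\sum_v c_v=0$, whence the two global heights are exactly proportional and share a zero locus. This is how the argument is run in \cite{Matt-Laura, prep, prep-2}, and with that correction your plan matches the approach the paper is invoking.
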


The second result we need for the proof of Theorem~\ref{main result} is the result below,  proved in Section~\ref{Mandelbrot section} by analyzing the coefficients of the analytic  isomorphism constructed in \cite{Douady-Hubbard} between the complement of $\M_d$ and the complement of the closed unit disk.

\begin{prop} \label{affinesymm} Suppose $\mu(z) = Az +B$ is an affine symmetry of $\mathbb{C}$ satisfying $\mu(\mathcal{M}_d) = \mathcal{M}_d$.  Then $A=\xi$ and $B=0$, where $\xi$ is  a $(d-1)$-st root of unity; moreover, all $(d-1)$-st roots of unity provide a rotational symmetry of $\mathcal{M}_d$.
\end{prop}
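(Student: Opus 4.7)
The plan is to transport the question to the complement of the closed unit disk via the Douady--Hubbard conformal uniformization of $\C\setminus\M_d$, and then match Laurent coefficients at infinity. Let $\Phi\colon\C\setminus\M_d\to\C\setminus\overline{\D}$ denote the unique conformal isomorphism tangent to the identity at infinity, with Laurent expansion $\Phi(t)=t+\sum_{n\le 0}b_n\,t^n$. If $\mu(z)=Az+B$ satisfies $\mu(\M_d)=\M_d$, then $\Phi\circ\mu\circ\Phi^{-1}$ is a biholomorphism of $\C\setminus\overline{\D}$ fixing $\infty$; by Schwarz reflection it extends to a M\"obius automorphism of $\widehat{\C}$ preserving the unit circle and $\infty$, and hence is a rotation $w\mapsto\omega w$ with $|\omega|=1$. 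This yields the functional equation
\[
\Phi(At+B)\;=\;\omega\,\Phi(t).
\]

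Next I would pin down the relevant coefficients $b_n$. The multibrot set $\M_d$ has the known rotational symmetry $t\mapsto\xi t$ for every $\xi^{d-1}=1$, since the conjugation $z\mapsto\xi z$ carries $f_t(z)=z^d+t$ to $f_{\xi t}(z)=z^d+\xi t$. By uniqueness of $\Phi$, this yields the functional equation $\Phi(\xi t)=\xi\,\Phi(t)$, and comparing Laurent series coefficient by coefficient forces $b_n=0$ unless $n\equiv 1\pmod{d-1}$. Combined with the direct calculation $(f_t(t))^{1/d}=t+\tfrac{1}{d}\,t^{2-d}+O(t^{3-2d})$, which captures the first subleading term of $\Phi(t)=\lim_n(f_t^n(t))^{1/d^n}$, I obtain the expansion $\Phi(t)=t+\tfrac{1}{d}\,t^{2-d}+O(t^{3-2d})$.

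For $d\ge 3$ it remains only to compare Laurent coefficients of $\Phi(At+B)$ and $\omega\,\Phi(t)$ power by power. The $t^1$ coefficient forces $A=\omega$, so $|A|=1$. Because the exponent $0$ is not of the form $1-k(d-1)$, the $t^0$ coefficient of $\omega\,\Phi(t)$ vanishes while that of $\Phi(At+B)$ equals $B$; hence $B=0$. Matching the $t^{2-d}$ coefficients then yields $\tfrac{1}{d}A^{2-d}=\tfrac{1}{d}A$, i.e.\ $A^{d-1}=1$, as required.

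The case $d=2$ is where the main obstacle lies, since the rotational symmetry argument is vacuous ($\xi=1$ is the only root of unity) and gives no vanishing constraint on the $b_n$. Here one would directly compute $\Phi(t)=t+\tfrac12-\tfrac{1}{8t}+O(t^{-2})$; matching Laurent coefficients then forces $A=\omega$, $B=(A-1)/2$, and $A^2=1$. The spurious possibility $A=-1$, $B=-1$ (i.e.\ $\mu(z)=-z-1$) must be ruled out by hand: for example, $-2\in\M_2$ (since $0\mapsto -2\mapsto 2$ is fixed under $z^2-2$) while $\mu(-2)=1\notin\M_2$, leaving only $A=1$, $B=0$, which is the unique $1$-st root of unity. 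Finally, the assertion that each $(d-1)$-st root of unity does provide a rotational symmetry is the same conjugation $z\mapsto\xi z$ invoked in the second paragraph, which shows directly that $\xi\M_d=\M_d$.
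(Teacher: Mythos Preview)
Your proof is correct and follows essentially the same approach as the paper: transport the affine symmetry through the Douady--Hubbard uniformization $\Phi$ to a rotation of $\C\setminus\overline{\D}$, then match Laurent coefficients at infinity, treating $d=2$ separately by an explicit check. The only notable difference is that the paper works with the inverse $\Psi=\Phi^{-1}$ and cites Shimauchi for the needed coefficient information ($b_m=0$ for $0\le m<d-2$, $b_{d-2}\ne 0$), whereas you derive the analogous facts for $\Phi$ directly---using the known $(d-1)$-fold rotational symmetry to force the vanishing pattern and the explicit iteration $(f_t(t))^{1/d}$ to identify the first subleading term---which makes your argument slightly more self-contained.
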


Using Theorems~\ref{MnotJ} and \ref{GHT}, and Proposition~\ref{affinesymm} we can prove now Theorem~\ref{main result}. In terms of notation, we always denote by $\partial S$ the boundary of a set $S\subseteq \C$.

\begin{proof}[Proof of Theorem \ref{main result}] Fix $d \geq 2,$ and suppose $h(z) \in \mathbb{C}[z]$ is a non-constant polynomial so that both $\bff_t(z) = z^d+t$ and $\bfg_t(z) = z^d+h(t)$ are PCF for infinitely many $t \in \mathbb{C}$. First of all, it is immediate to deduce that each parameter $t$ such that $\bff_t$ is PCF (i.e., $0$ is preperiodic for $z\mapsto z^d+t$) is an algebraic number. Then  Theorem~\ref{GHT} yields that for all $t \in \mathbb{C}$, the map $z\mapsto \bff_t(z)$ is PCF if and only $z\mapsto\bfg_t(z)$ is PCF.  

The closure of the set of parameters $t \in\mathcal{M}_d$ which correspond to a PCF map $z\mapsto z^d+t$ contains the boundary of $\mathcal{M}_d$;  more precisely, this boundary can be topologically identified by the PCF points as follows: $\partial \mathcal{M}_d$ consists exactly of those $c \in \mathbb{C}$ such that every open neighborhood of $c$ contains infinitely many distinct PCF parameters.  Using this characterization of $\partial \mathcal{M}_d$ and the above conclusion of Theorem~\ref{GHT} that $\bff_t(z)$ is PCF iff $\bfg_t(z)$ is PCF, we see that by continuity of $h$ and the open mapping theorem, $\partial \mathcal{M}_d$ is totally invariant for $h$; that is, $ \partial \mathcal{M}_d = h^{-1}(\partial \mathcal{M}_d)$. Since $\mathbb{C} \setminus \mathcal{M}_d$ is connected, and $h$ maps with full degree on a neighborhood of $\infty$, $\mathbb{C} \setminus \mathcal{M}_d$ is also totally invariant for $h$.  So, $h^{-1}(\M_d)=\M_d$, and if $h$ is linear, then $h$ is an affine symmetry of $\mathcal{M}_d$; in this case  Proposition \ref{affinesymm} yields the desired conclusion.

Suppose from now on that $h$ has degree at least $2$, and denote by $h^n$ the $n$-th iterate of $h$ under composition.  Recall the definition of the \emph{filled Julia set of $h$} 
$$K_h := \{ z \in \mathbb{C} : |h^n(z)| \text{ is bounded uniformly in } n \},$$
and then the \emph{Julia set of $h$} is $J_h := \partial K_h$; by Montel's theorem (see \cite{Milnor}), $J_h$ is also the minimal closed set containing at least 3 points which is totally invariant for $h$.  Therefore $J_h \subset \partial \mathcal{M}_d$.  Since $\mathbb{C} \setminus \mathcal{M}_d$ is connected, $K_h \subset \mathcal{M}_d$.   On the other hand, since $\mathbb{C} \setminus \mathcal{M}_d$ is totally invariant for $h$ and contains a neighborhood of $\infty$, every point of $\mathcal{M}_d$ is bounded under iteration by $h$, and so by definition, $\mathcal{M}_d \subset K_h$.  Therefore $\mathcal{M}_d = K_h$, contradicting Theorem \ref{MnotJ}.
\end{proof}

\section{The $d$-th multibrot set is not a filled Julia set}
\label{Mandelbrot section}

Our goal in the Section is to provide a complex-dynamical proof of Theorem \ref{MnotJ}.  We first recall a few basic facts and definitions from complex dynamics; see \cite{Douady-Hubbard, Milnor}. Throughout this section we denote by $\overline{\mathbb{D}}$ the closed unit disk in the complex plane, and we denote by $S^1$ its boundary (i.e., the unit circle in the complex plane).   Let $f: \mathbb{C} \rightarrow \mathbb{C}$ be a polynomial of degree $d \geq 2$.  If the filled Julia set $K_f$ is connected, there exists an isomorphism (called a B\"ottcher coordinate of $f$)
$$\phi_f: \mathbb{C} \setminus K_f \rightarrow \mathbb{C} \setminus \overline{\mathbb{D}}$$
so that $\phi_f$ conjugates $f$ to the $d$-th powering map. Writing 
$$\alpha = \lim_{z \rightarrow \infty} \frac{\phi_f(z)}{z},$$ 
one can show that $\alpha$ is a $(d-1)$-st root of the leading coefficient of $f$, and $\phi_f$ is unique up to choice of this root. When $f$ is monic, we always
normalize $\phi_f$ so that $\alpha=1$.

Fix $d \geq 2$, and let $f_c(z) := z^d+c$ (with the exception of the $d$-th multibrot set, we will suppress dependence on $d$ in notation). We note that  the $d$-th multibrot set $\M_d$ is defined to be the set of parameters $c \in \mathbb{C}$ such that the  Julia set $J_{f_c}$ is connected; equivalently, $\mathcal{M}_d$ is the set of parameters for which the forward orbit of $0$ under $f_c$ is bounded.  As described in \cite{Douady-Hubbard}, if $c \not\in \mathcal{M}_d$, then the B\"ottcher coordinate $\phi_{f_c}$ will not extend to the full $\mathbb{C} \setminus K_{f_c},$ but is guaranteed to extend to those $z$ which satisfy $G_{f_c}(z) > G_{f_c}(0)$, where $G_{f_c}$ is the dynamical Green's function 
$$G_{f_c}(z) = \lim_{n \rightarrow \infty} \frac{\log|f_c^n(z)|}{d^n}.$$ 
In particular, the B\"ottcher coordinate is defined on the critical value $c$.  Therefore we have a function $\Phi(c) := \phi_{f_c}(c)$, 
and one can show (see also \cite{Matt-Laura}) that this function is an analytic isomorphism $\mathbb{C} \setminus \mathcal{M}_d \rightarrow \mathbb{C} \setminus \overline{\mathbb{D}}$, with 
$$\lim_{c \rightarrow \infty} \frac{\Phi(c)}{c} =1.$$  
Using a careful analysis of the coefficient of $\Phi$ allows us to compute the affine symmetry group of $\mathcal{M}_d$ and thus prove Proposition~\ref{affinesymm}. 

\begin{proof}[Proof of Proposition~\ref{affinesymm}.] 
By definition of the $d$-th multibrot set, it is clear that $\M_d$ is invariant under rotation by an angle multiple of $\frac{2\pi}{d-1}$.  Suppose now that $\mu(z)=Az+B$ fixes $\mathcal{M}_d$; it follows that $\mu$ fixes $\mathbb{C} \setminus \mathcal{M}$.  Denote by $\Psi(z)$ the inverse of $\Phi(z)$.  The automorphism $m(z) = \Phi \circ \mu \circ \Psi$ fixes $\infty$ and so is a rotation $m(z) = \lambda z$ with  $\lambda \in S^1$. Now,  $\Psi$ has local expansion about $\infty$:
$$\Psi(z) = z + \sum_{m=0}^{\infty} b_m z^{-m};$$
as computed by Shimauchi in \cite{Shimauchi}, where $b_m = 0$ for $0 \leq m < d-2$ and $b_{d-2} \ne 0$. Expanding the equality $\Psi(m(z)) = \mu(\Psi(z))$ shows $A = \lambda$ is a $(d-1)$-st root of unity; for $d >2$ we also have $B = 0$ as desired.  For $d = 2$, one computes $b_{0} = -1/2$ and $b_1 \ne 0$, and again expanding we see that $A = \lambda = \pm 1$ and $B = \lambda/2 - 1/2.$  Since $z \mapsto -z-1$ is clearly not a symmetry of the Mandelbrot set, then there are no nontrivial affine symmetries of $\M_2$ and so, Proposition~\ref{affinesymm} is proved.
\end{proof}

From now on we fix a polynomial $h(z)$ with complex coefficients having connected Julia set. The goal of Theorem~\ref{MnotJ} is to show that the filled Julia set of $h$ does not equal $\mathcal{M}_d$. As before, we let $\phi_h$ be a B\"ottcher coordinate for $h$.

\begin{defi} Fix $\theta \in [0, 1]$.  The external ray 
$$\mathcal{R}_h(\theta) := \phi_h^{-1} \left( \{ re^{2 \pi i \theta} : r > 1 \} \right)$$
is the {\bf dynamic ray} corresponding to $\theta$; the external ray 
$$\mathcal{R}(\theta) := \Phi^{-1} \left( \{ re^{2 \pi i \theta} : r > 1 \} \right)$$
is the {\bf parameter ray} corresponding to $\theta$. Sometimes, by abuse of language, we will refer to a (dynamic or parameter) ray simply by the corresponding angle $\theta$. 
\end{defi}

The following result is proved in \cite{Douady-Hubbard}.
\begin{thm}[Douady-Hubbard]
\label{rays land} 
All parameter rays $\mathcal{R}(\theta)$ with $\theta \in \mathbb{Q}$ will land; that is, the limit $\lim_{r \rightarrow 1} \Phi(r e^{2 \pi i \theta})$ exists and lies on the boundary of the $d$-th multibrot set.  If $\theta$ is rational with denominator coprime to $d$, then there is at most one $\theta'  \in [0, 1]$ such that $\theta' \ne \theta$, and $\mathcal{R}(\theta)$ and $\mathcal{R}(\theta')$ land at the same point. 
\end{thm}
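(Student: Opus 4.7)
The plan is to follow the classical Douady-Hubbard strategy, which handles this statement in two stages: first in the dynamical plane of each $f_c$ with $c\in\mathcal{M}_d$, and then a transfer to the parameter plane exploiting the identity $\Phi(c)=\phi_{f_c}(c)$ that makes the external angle of a parameter coincide with the external angle of the critical value in the dynamical plane of $f_c$.

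First I would establish the dynamical-plane version.  Fix $c\in\mathcal{M}_d$ and a rational angle $\theta=p/q$.  Under multiplication by $d$ modulo $1$, the angle $\theta$ is strictly periodic when $\gcd(q,d)=1$ and strictly pre-periodic when $d\mid q$.  Using the functional equation $\phi_{f_c}\circ f_c=\phi_{f_c}^{\,d}$, the dynamic ray $\mathcal{R}_c(\theta)$ is permuted by an iterate $f_c^n$, where $n$ is the eventual period of $\theta$; an expansion/contraction argument applied to a suitable inverse branch of $f_c^n$ at a repelling periodic point (or a parabolic flower analysis in the indifferent case) then shows that $\mathcal{R}_c(\theta)$ accumulates on a single repelling or parabolic (pre-)periodic point of $f_c$.

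Next I would push the conclusion out to the parameter plane.  By the identity $\Phi(c)=\phi_{f_c}(c)$, tracing the parameter ray $\mathcal{R}(\theta)$ inward as $r\to 1^+$ is precisely the same as watching the critical value $c$ move along the dynamic ray $\mathcal{R}_c(\theta)$ in its own dynamical plane; a holomorphic-motion argument for the family of dynamic rays (equivalently, a wringing/quasiconformal-surgery deformation as in Douady-Hubbard) compares these dynamical landings across nearby parameters and produces a well-defined limit $c_0:=\lim_{r\to 1^+}\Phi^{-1}(re^{2\pi i\theta})\in\partial\mathcal{M}_d$, which is parabolic when $\gcd(q,d)=1$ and Misiurewicz when $d\mid q$.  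For the uniqueness claim, a local analysis at the parabolic root $c_0$, pairing parameter-ray landings with external angles of $c_0$ as a point of $J_{f_{c_0}}$ via the Goldberg-Milnor combinatorial picture, shows that at most two parameter rays can land at such a $c_0$, and hence there is at most one partner $\theta'\neq\theta$.  I expect the main obstacle to be this transfer step: the dynamical landing only addresses one map at a time, and ruling out wild accumulation of $\Phi^{-1}(re^{2\pi i\theta})$ on $\partial\mathcal{M}_d$ requires the substantial holomorphic-motion and quasiconformal-surgery machinery that constitutes the technical heart of Douady-Hubbard.
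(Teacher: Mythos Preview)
The paper does not give a proof of this theorem at all: it is stated as a result of Douady--Hubbard and simply attributed to \cite{Douady-Hubbard}, with no argument supplied. So there is no ``paper's own proof'' to compare against; your proposal is being measured only against the classical literature the paper is invoking.

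On that score, your outline is the standard Douady--Hubbard architecture and is broadly correct: (i) landing of rational dynamic rays at repelling or parabolic (pre)periodic points via the conjugacy $\phi_{f_c}\circ f_c=\phi_{f_c}^{\,d}$ and a contraction argument on inverse branches; (ii) transfer to parameter space through $\Phi(c)=\phi_{f_c}(c)$ together with the holomorphic-motion/QC-deformation machinery; and (iii) the Orsay-notes analysis of parabolic roots (in the spirit of Goldberg--Milnor portraits) to bound the number of co-landing parameter rays by two. Your identification of the landing point as parabolic when $\gcd(q,d)=1$ and Misiurewicz when $d\mid q$ is also the correct dichotomy. One imprecision worth flagging: the sentence ``tracing the parameter ray $\mathcal{R}(\theta)$ inward is precisely the same as watching the critical value $c$ move along the dynamic ray $\mathcal{R}_c(\theta)$ in its own dynamical plane'' is heuristically right but technically loose, since the dynamical plane is varying with $c$; the actual argument fixes an accumulation point $c_0\in\partial\mathcal{M}_d$, analyzes $f_{c_0}$, and then uses stability of the relevant repelling/parabolic orbit and its rays under perturbation (or a limiting argument for the parabolic case) to pin down the landing. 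As you anticipate, this transfer step is where all the real work lives, and your sketch correctly names---but does not carry out---the holomorphic-motion and surgery arguments that make it rigorous.
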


We call such a pair $(\theta, \theta')$ a {\it landing pair}.   The {\it period} of the pair is the period of $\theta$ (and $\theta'$) under the multiplication-by-$d$ map modulo 1; call this map $\tau_d$.  Any pair of period $n$ will land at the root of a hyperbolic component of period $n$.  Note that by definition, any $\theta$ of period $n$ under $\tau_d$ will have denominator dividing $d^n - 1$. As an important example of the above facts, the only period $2$ angles under the doubling map $\tau_2$ are $\theta = 1/3$ and $\theta' = 2/3$, which land at the root point of the unique period 2 hyperbolic component, namely at $c = -\frac{3}{4}$ (see Figure 1).  

\begin{figure}
  \caption{Examples of parameter rays for $d = 2$.}
  \centering
    \includegraphics[width=0.5\textwidth]{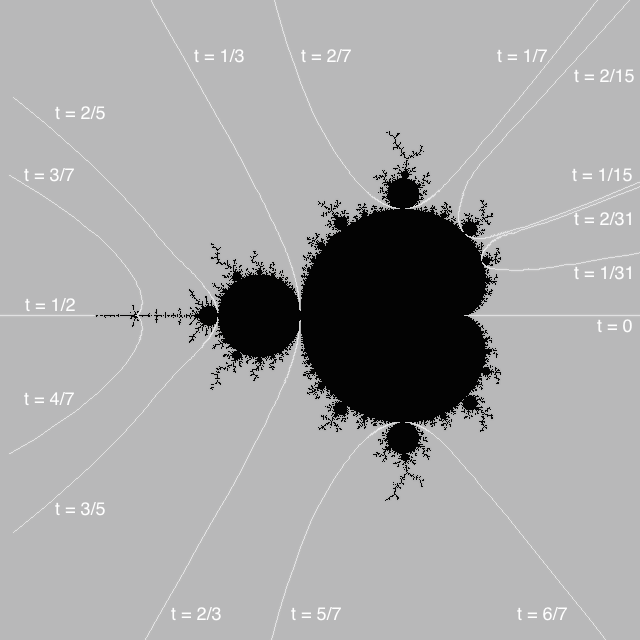}
\end{figure}

We make the obvious but important remark that since $h$ is (continuously) defined on the entire complex plane, any pair of dynamic rays $\mathcal{R}_h(\theta)$ and $\mathcal{R}_h(\theta')$ which land on the same point will be mapped under $h$ to a (possibly equal) pair of rays that also land together.  The impossibility of such a map on the parameter rays of the $d$-th multibrot set which preserves landing pairs is the heart of the proof of  Theorem \ref{MnotJ}.  

Finally, note that external rays cannot intersect, and that removing any landing pair $\mathcal{R}_h(\theta), \mathcal{R}_h(\theta')$ will decompose $\mathbb{C} \setminus K_h$ into a union of two disjoint open sets.  Thus we can make the following definition.

\begin{defi} Suppose $\mathcal{R}_h(\theta)$ and $\mathcal{R}_h(\theta')$ land at the same point. The open set in $\mathbb{C} \setminus (K_h \cup \mathcal{R}_h(\theta) \cup \mathcal{R}_h(\theta'))$ which does not contain the origin is the {\bf wake} of the rays. 
\end{defi}

We recall the following classical  classification of fixed points from complex dynamics (for more details, see \cite{Milnor}).

\begin{defi} \label{basicdyn} If $f_c(\alpha) = \alpha$, we call $\lambda = f_c'(\alpha)$ the {\bf multiplier} of the fixed point. If $|\lambda|<1$, then we say that $\alpha$ is {\bf attracting}.  If $\lambda$ is a root of unity, we say the fixed point is {\bf parabolic}.
\end{defi}

Denote by $H_d$ the main hyperbolic component of $\mathcal{M}_d$:
$$H_d := \{ c \in \mathbb{C} : z^d +c \text{ has an attracting fixed point} \}.$$
See \cite{Douady-Hubbard} or \cite{Milnor} for more on hyperbolic components of $\mathcal{M}_d$ and fixed point theory.

\begin{prop} \label{sector} Let $d,n \ge 2$ be integers, let $\theta = \frac{1}{d^n-1}$ and $\theta' = \frac{d}{d^n-1}$.  Then $\theta$ and $\theta'$ form a landing pair, and their common landing point lies on the boundary of the main hyperbolic component $H_d$.
\end{prop}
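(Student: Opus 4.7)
The plan is to identify the common landing point of $\mathcal{R}(\theta)$ and $\mathcal{R}(\theta')$ as a parabolic parameter $c^* \in \partial H_d$, namely one at which the attracting fixed point associated to a chosen connected component of $H_d$ becomes parabolic with multiplier $e^{2\pi i/n}$. Existence of $c^*$ follows from parametrizing each connected component of $H_d$ by the multiplier of its attracting fixed point and extending continuously to the boundary, giving a $(d-1)$-to-$1$ covering of $\partial H_d$ onto the unit circle.

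I would first dispose of the elementary combinatorics. A direct computation gives $\tau_d^j(\theta) = d^j/(d^n-1)$ for $0 \le j < n$ and $\tau_d^n(\theta) = \theta$, so both $\theta$ and $\theta'$ have exact period $n$ under $\tau_d$, since $(d^k - 1) \nmid (d^n - 1)$ for $0 < k < n$. Since $\gcd(d, d^n - 1) = 1$, Theorem~\ref{rays land} ensures that $\mathcal{R}(\theta)$ and $\mathcal{R}(\theta')$ both land, at points of $\partial \mathcal{M}_d$.

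The heart of the proof is a dynamical analysis of $f_{c^*}$. By the Leau-Fatou flower theorem and the standard theory of parabolic periodic points (see \cite{Milnor}), exactly $n$ dynamic rays of $f_{c^*}$ land at the parabolic fixed point $z^*$, and they form a single cycle under $\tau_d$ of period $n$ with combinatorial rotation number $1/n$ on the circle. I would then verify that the cycle $\{d^j/(d^n - 1) : 0 \le j < n\}$, which contains $\theta$ and $\theta'$ as consecutive elements, has rotation number $1/n$: sorted in $[0,1)$ the values appear in the natural order since $d^{n-1} < d^n - 1$, and $\tau_d$ acts by the shift $j \mapsto j + 1$. The main technical step is identifying this specific cycle as the one actually landing at $z^*$, rather than some other period-$n$ cycle of rotation number $1/n$ (such cycles can exist for $d \ge 3$); I expect this to be the hardest part of the argument, and plan to resolve it by tracking a dynamic ray continuously as the parameter approaches $c^*$ from inside the relevant component of $H_d$, where the landing pattern at the attracting fixed point is well-understood and limits to the required parabolic configuration.

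Finally, I would invoke the Douady-Hubbard correspondence between parameter and dynamical planes: the parameter rays landing at a parabolic parameter $c^*$ are exactly those angles $\alpha$ for which $\mathcal{R}_{f_{c^*}}(\alpha)$ lands at $z^*$ and bounds the Fatou component of $f_{c^*}$ containing the critical value. Since the cycle of rays landing at $z^*$ has rotation number $1/n$, these two bounding rays are necessarily consecutive in the cyclic order on the circle; using the rotational symmetry of $\mathcal{M}_d$ from Proposition~\ref{affinesymm} to select the component of $H_d$ appropriately, one identifies this consecutive pair as $(\theta, \theta') = (1/(d^n-1), d/(d^n-1))$, concluding that both parameter rays land at $c^* \in \partial H_d$.
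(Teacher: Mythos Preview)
Your overall architecture is the same as the paper's: locate a parabolic parameter $c^*\in\partial H_d$ with multiplier $e^{2\pi i/n}$, show that the dynamic rays landing at its parabolic fixed point form a single period-$n$ cycle with rotation number $1/n$, identify that cycle with $\{d^j/(d^n-1):0\le j<n\}$, and then read off the characteristic pair. The divergence is in how you carry out the identification step, and that is where your proposal has a genuine gap.

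You propose to pin down the cycle by ``tracking a dynamic ray continuously as the parameter approaches $c^*$ from inside the relevant component of $H_d$, where the landing pattern at the attracting fixed point is well-understood.'' But for $c$ in the interior of $H_d$ the distinguished fixed point is \emph{attracting}, hence lies in the Fatou set, and no external ray lands there; there is no landing pattern at that point to track. What actually happens as $c\to c^*$ inside $H_d$ is that a \emph{repelling} period-$n$ orbit collides with the fixed point, and it is the rays landing on that repelling orbit that limit onto the parabolic ray portrait. To make a continuity argument work you would have to (i) identify which period-$n$ repelling orbit collapses, (ii) determine which external angles land on it at some reference parameter (say $c=0$, where the Julia set is $S^1$ and the landing map is explicit), and (iii) justify that this landing data persists along a path to $c^*$. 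Step (i) already requires nontrivial work, and the phrase ``well-understood'' hides exactly the difficulty you flagged as the hardest part.

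The paper avoids this analytic tracking entirely. It instead uses Goldberg's combinatorial classification of degree-$d$ rotation subsets of $S^1$ by rotation number and \emph{deployment sequence} (Theorem~\ref{goldberg}). One shows, via a sector/critical-point argument, that each $S_i$ has a single cycle and deployment sequence taking only the values $0$ and $n$; there are exactly $d-1$ such rotation sets, and since the $S_i$ are pairwise disjoint (parameter rays landing at distinct $c_i$ have angles in distinct $S_i$), they exhaust them. The set $F=\{d^j/(d^n-1)\}$ visibly has deployment sequence $(n,n,\dots,n)$, so $F=S_i$ for some $i$. This is a clean counting argument and sidesteps any limiting procedure. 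A minor point: $H_d$ is connected (it is the image of a disk under $z\mapsto z-z^d$), so there are not multiple ``components of $H_d$'' to choose among; what you are really selecting, via the $(d-1)$-fold rotational symmetry, is one of the $d-1$ parabolic parameters on $\partial H_d$ with the given multiplier.
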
 

The proof follows the ideas of  \cite[Proposition~2.16]{Carminati-Isola-Tiozzo}  (which yields the case $d=2$); however, for $d > 2$ the combinatorics are more delicate, so we prove the Proposition for all $d \geq 2$. Before proceeding to the proof of Proposition~\ref{sector} we introduce the notation and state the necessary facts regarding the combinatorics of subsets of $S^1$ required for our arguments. We denote by $\tau_d$ the $d$-multiplication map on $\mathbb{R} / \mathbb{Z}$; by abuse of notation, we denote also by $\tau_d$ the induced map on $S^1$, i.e. $\tau_d\left(e^{2\pi\alpha}\right)=e^{2\pi d\alpha}$. Following the notation of \cite{Carminati-Isola-Tiozzo} and  \cite{Goldberg}, we make the following definitions.

\begin{defi} Given $r = \frac{m}{n} \in \mathbb{Q}$ (not necessarily in lowest terms), we say that a finite set $X \subset S^1$ is a degree $d$ $m/n$-rotation set if $\tau_d(X) = X$, and the restriction of $\tau_d$ to $X$ is conjugate to the circle rotation $R_r$ via an orientation-preserving homeomorphism of $S^1$.  Writing $\frac{m}{n} = \frac{p}{q}$ in lowest terms, we say $X$ has {\bf rotation number} $p/q$. 
\end{defi}

A useful equivalent definition is the following: $X = \{ \theta_i \} \subset \mathbb{R} / \mathbb{Z}$, indexed so that 
$$0 \leq \theta_0 < \theta_1 < \cdots < \theta_{n-1} < 1,$$ 
is a degree $d$ $m/n$-rotation subset of $S^1$ if it satisfies 
$$\tau_d(\theta_i) \equiv \theta_{i + m \mod n} \mod 1$$
for all $0 \leq i < n-1$.  Note that by Corollary 6 of \cite{Goldberg}, if $\frac{m}{n} = \frac{p}{q}$ is in lowest terms, any such $X$ has $n = kq$ elements, where $1 \leq k \leq d-1$.

\begin{defi} Given a set 
$$S = \{ \theta_0, \dots, \theta_{n-1} \}$$ 
satisfying 
$$0 \leq \theta_0 < \theta_1 < \cdots < \theta_{n-1} < 1,$$
suppose that $S$ is a degree $d$ $m/n$-rotation set. The {\bf deployment sequence} of $S$ is the ordered set of $d$ integers $\{ s_1, \dots, s_{d-1} \}$, where $s_i$ denotes the total number of angles $\theta_j$ in the interval $[ 0, \frac{i}{d-1} )$.
\end{defi}

By \cite[Theorem 7]{Goldberg}, rotation subsets are determined by the data of rotation number and deployment sequence, as shown in the following statement.

\begin{thm}[Goldberg \cite{Goldberg}] \label{goldberg}  A rotation subset of the unit circle is uniquely determined by its rotation number and its deployment sequence.  Conversely, given $r = \frac{p}{q}$ in lowest terms and a deployment sequence
$$0 \leq s_1 \leq s_2 \leq \cdots \leq s_{d-1} = kq,$$
there exists a rotation subset of $S^1$ with this rotation number and deployment sequence if and only if every class modulo $k$ is realized by some $s_j$.
\end{thm}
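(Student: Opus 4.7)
The plan is to identify a rotation set with its base-$d$ digit sequence and then match this sequence against the deployment data. Let $X = \{\theta_0 < \cdots < \theta_{n-1}\}$ with $\tau_d(\theta_j) = \theta_{j+m\bmod n}$, where $m/n = p/q$ and $(m,n) = (kp, kq)$. A direct calculation gives $\tau_d^q|_X = \mathrm{id}_X$, so every $\theta_j$ is a period-$q$ point of $\tau_d$ and hence $\theta_j = a_j/(d^q - 1)$ for some integer $0 \le a_j < d^q - 1$. Define the symbol sequence $e_j := \lfloor d\theta_j \rfloor \in \{0, \ldots, d-1\}$: this is weakly increasing in $j$, and iterating the identity $d\theta_j - e_j = \theta_{j+m\bmod n}$ exactly $q$ times yields the closed form
\[
\theta_j \;=\; \frac{1}{d^q - 1}\sum_{\ell=0}^{q-1} e_{j + \ell m \bmod n}\, d^{q - \ell - 1}.
\]
Hence $(e_j)$ determines $X$ once the rotation number is fixed, as does the equivalent data $t_\ell := |\{j : \theta_j < \ell/d\}|$.

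\textbf{Uniqueness.} Set $Q_i := [i/(d-1), (i+1)/(d-1))$. The nesting $i/(d-1) < (i+1)/d < (i+1)/(d-1)$ shows that each $Q_i$ splits into a left subarc $[i/(d-1), (i+1)/d)$ on which $e_j = i$, and a right subarc $[(i+1)/d, (i+1)/(d-1))$ on which $e_j = i+1$. The deployment sequence gives $|X \cap Q_i|$ but not the split between subarcs; the rotation data fixes the split via the following observation: if $\theta_j$ sits in the left subarc of $Q_i$ then $\tau_d(\theta_j) \in Q_i \cup \cdots \cup Q_{d-2}$, and if it sits in the right subarc then $\tau_d(\theta_j) \in Q_0 \cup \cdots \cup Q_i$. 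Since the $Q$-sector of $\tau_d(\theta_j) = \theta_{j+m\bmod n}$ is read off from the deployment and the rotation number, we can determine the subarc of $\theta_j$ whenever $\tau_d(\theta_j) \notin Q_i$; ambiguous cases $\tau_d(\theta_j) \in Q_i$ are resolved by iterating $\tau_d$ until the image escapes $Q_i$, which must happen within at most $q$ steps since $\theta_j$ is non-fixed (for $q \geq 2$). This recovers $(e_j)$ and hence $X$.

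\textbf{Existence and the modular condition.} For the converse, the plan is to reverse the dictionary: given $p/q$ and an admissible deployment, compute the candidate $(e_j)$ and hence candidate $\theta_j$'s via the closed form, then verify distinctness, ordering, and that the prescribed deployment is realized. The rotation set decomposes into $k$ orbits $O_\alpha = \{\theta_j : j \equiv \alpha \bmod k\}$ of size $q$, each itself a rotation set of rotation number $p/q$ with $k = 1$. The full deployment partitions as $s_i = \sum_\alpha s_i^{(\alpha)}$, with the orbit-wise pieces governed by the uniform interleaving of orbits in the cyclic order on $X$: writing $s_i = qk' + r$ with $0 \le r < k$ gives $s_i^{(\alpha)} = k' + 1$ for $\alpha < r$ and $s_i^{(\alpha)} = k'$ otherwise. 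The condition that $\{s_j \bmod k : 1 \le j \le d-1\}$ covers $\mathbb{Z}/k\mathbb{Z}$ is exactly what ensures every orbit receives a legal (non-decreasing, bounded by $q$) partial deployment; missing a residue class forces some orbit's count to jump inconsistently across a $Q$-boundary.

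\textbf{Main obstacle.} The most delicate step is the iterative disambiguation in the uniqueness argument: when many consecutive elements of $X$ cluster in the same $Q_i$ whose $\tau_d$-image also lies entirely inside $Q_i$, one must trace the rotation through several steps before the image leaves $Q_i$ and read off the split structure. The edge case $q = 1$, where $X$ is a subset of the $\tau_d$-fixed points, requires separate treatment. A secondary difficulty is the explicit verification that the modular-residue condition is simultaneously necessary and sufficient for orbit-wise existence, which requires exhibiting a contradiction under any failure of the condition via a careful orbit-decomposition analysis.
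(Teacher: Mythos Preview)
First, note that the paper does not supply its own proof of this theorem: it is quoted directly from Goldberg's paper and used as a black box. So there is no ``paper proof'' to compare against; your proposal must stand on its own.

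There is a genuine gap in your uniqueness argument. You claim that when $\theta_j\in Q_i$ and $\tau_d(\theta_j)\in Q_i$ as well, iterating $\tau_d$ will force the orbit out of $Q_i$ within $q$ steps ``since $\theta_j$ is non-fixed.'' This is false. Take $d=3$, $q=2$, $Q_0=[0,\tfrac12)$: the pair $\{\tfrac18,\tfrac38\}$ is a genuine degree-$3$ rotation set with rotation number $\tfrac12$ and deployment $(2,2)$, and its entire $\tau_3$-orbit lies in $Q_0$. Your disambiguation procedure never terminates here, so you never recover $(e_0,e_1)$ from the deployment data by the method you describe. More generally, $Q_i$ supports two inverse branches of $\tau_d$ and hence carries nontrivial periodic orbits of every period, so the phenomenon is not exceptional. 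Even in cases where the orbit does eventually exit $Q_i$, your argument only determines the subarc of the \emph{last} point before exit; you have not explained how to propagate that information backward to earlier points in the run, and in fact knowing the subarc of $\theta_{j+m}$ inside $Q_i$ does not by itself pin down the subarc of $\theta_j$.

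The existence half is also too thin to stand as a proof. You reduce to the $k=1$ case but never establish it; and the claim that the $k$ single-cycle rotation sets you build will interleave correctly (so that their union has the prescribed deployment \emph{and} is itself a rotation set) requires an argument. Likewise, the necessity of the modular condition is asserted (``forces some orbit's count to jump inconsistently'') but not demonstrated. A workable repair of the uniqueness direction is to abandon the iterative disambiguation and instead exploit monotonicity: within each $Q_i$ the symbols $e_j$ are weakly increasing in $j$, so what must be determined is a single cutoff index, and this cutoff is constrained globally by the requirement that $\tau_d$ act as the prescribed cyclic permutation on indices. Goldberg's original argument proceeds via the long complementary gaps of $X$ (each gap of length $\ge 1/d$ contains exactly one fixed point of $\tau_d$, and the deployment records which), and you may find it cleaner to follow that route.
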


Now we can proceed to the proof of Proposition~\ref{sector}.
\begin{proof}[Proof of Proposition~\ref{sector}.]  
Let $r = \frac{1}{n}$.  The boundary of $H_d$ contains exactly $d-1$ parameters $c_i$ ($1\leq i\leq d-1$) so that $f_{c_i}$ has a fixed point of multiplier $e^{2 \pi i r}$.  For each $c_i$, denote by $\alpha_i$ the unique parabolic fixed point of $f_{c_i}$, which necessarily attracts the unique non-fixed critical point 0 (see \cite{Milnor}).  Consequently, the Julia set of $f_{c_i}$ (which we denote by  $J_{f_{c_i}}$) is locally connected, so all external rays land, and we have a landing map $L_i: \mathbb{R} / \mathbb{Z} \rightarrow J_{f_{c_i}}$, with
$$L_i(\theta) := \lim_{\rho \rightarrow 1^+} \phi_{f_{c_i}}^{-1} (\rho e^{2 \pi i \theta}),$$
where $\phi_{f_{c_i}}$ is the normalized uniformization map $\phi_{f_{c_i}}: \mathbb{C} \setminus K_{f_{c_i}} \rightarrow \mathbb{C} \setminus \overline{\mathbb{D}}$.   Then the landing map defines a continuous semiconjugacy, so that 
$$f_{c_i} ( L_i(\theta)) = L_i(\tau_d(\theta)).$$

Define $S_i := L_i^{-1}(\alpha_i),$ the set of angles whose rays land at $\alpha_i$.  Since $f_{c_i}$ has multiplier $e^{2 \pi i / n}$ at $\alpha_i$, $S_i$ is a rotation subset of $S^1$ with rotation number $r = \frac{1}{n}$ (see Lemma 2.4 of \cite{Goldberg-Milnor}). Writing 
$$S_i = \{ 0 \leq \theta_{i, 0} < \theta_{i, 1} < \cdots < \theta_{i, kn-1} < 1 \},$$
with $k$ cycles, each pair $\left(\theta_{i, j\mod kn}, \theta_{i,j+1 \mod kn}\right)$ cuts out an open arc $a_{i,j}$ in $S^1$.  Since the map preserves cyclic orientation and the collection $S_i$, any arc $a_{i,j}$ with length less than $\frac{1}{d}$ is mapped homeomorphically onto another arc and expands by a factor of $d$.  Since $n > 1,$ no arc is fixed, so for each arc, some iterate of the arc cannot be mapped homeomorphically onto its image, and has length $\geq \frac{1}{d}$.  For such an arc, the map $\tau_d$ maps the arc onto $S^1$, and so the sector $S$ which is bounded by the corresponding rays in the dynamical plane has image containing the set
$$T:= \mathbb{C} \setminus \bigcup_{0 \leq j \leq kn-1} \mathcal{R}(\theta_{i, j}).$$
Since the critical point is contained in an attracting petal, both the critical point $0$ and the critical value $c_i$ are contained in the Fatou set, so in $T$.  Since $S$ maps onto $T$, $f_{c_i}(S)$ contains the critical value, and so $S$ contains the critical point.  Since the sectors are disjoint, we conclude that there is a unique orbit of arcs, and so a unique cycle of elements of $S_i$, i.e. $k = 1$.  Note also that the unique sector corresponding to an arc of length $\geq \frac{1}{d}$ is the only sector which can contain a fixed ray, and therefore each element $s_{i, j}$ of the deployment sequence satisfies $s_{i,j} \in \{ 0, n \}$. 

We know by \cite{Schleicher} that the angles of parameter rays landing on $c_i$ is a subset of $S_i$ for each $i$; by $\tau_d$-invariance, then, the $S_i$ are disjoint.  Therefore, by Theorem~\ref{goldberg}, the sets $S_1, \dots, S_{d-1}$ are precisely the $d-1$ rotation subsets with rotation number $r = \frac{1}{n}$ and  deployment sequences containing only the values $0$ and $n$.  Noting that the subset 
$$F := \left\{ \frac{1}{d^n-1}, \frac{d}{d^n-1}, \dots, \frac{d^{n-1}}{d^n-1} \right\}$$
has rotation number $\frac{1}{n}$ and deployment sequence $\{ n, n, \dots, n \}$, we see that there exists an $i$ such that $F$ is the set of angles of dynamic rays landing at $\alpha_i$ for $f_{c_i}$; without loss of generality, $F = S_1$.  

It remains to show that $\frac{1}{d^n-1}$ and $\frac{d}{d^n-1}$ land together on $c_1$ in parameter space.  The pair of angles in $S_1$ landing on $c_1$ in parameter space are precisely the angles of $S_1$ whose dynamic rays are {\it characteristic}; that is, their wake separates the critical point 0 from the critical value $c_1$ (see \cite{Schleicher}).  Since $\tau_d$ maps an arc $[ \frac{d^k}{d^n-1}, \frac{d^{k+1}}{d^n-1}] \subset \mathbb{R} / \mathbb{Z}$ homeomorphically onto its image if and only if $0 \leq k < n-1$, the wake in the dynamical plane of $\mathcal{R}(\frac{d^{n-1}}{d^n-1})$ and  $\mathcal{R}(\frac{d^n}{d^n-1}) = \mathcal{R}(\frac{1}{d^n-1})$ contains the critical point.  Therefore the wake of $\mathcal{R}(\frac{1}{d^n-1})$ and $\mathcal{R}(\frac{d}{d^n-1})$ contains the critical value, and the conclusion follows.
\end{proof}

\begin{proof}[Proof of Theorem \ref{MnotJ}]
Suppose $h$ is a polynomial with filled Julia set $K_h = \mathcal{M}_d$. It is clear that such a $h(z)$ must have $D=$ deg$(h)\geq 2$ (since the filled Julia set of a linear polynomial is either the empty set, or a point, or the whole complex plane).  Therefore we have a B\"ottcher coordinate $\phi_h$ on $\mathbb{C} \setminus \mathcal{M}$ as described above. 

\begin{lemma} Suppose that $h(z)$ is a polynomial of degree $D \geq 2$ with $K_h = \mathcal{M}_d$.  Then the B\"ottcher coordinate $\phi_h$ can be chosen to be the map $\Phi: \mathbb{C} \setminus \mathcal{M}_d \rightarrow \mathbb{C} \setminus \overline{\mathbb{D}}$, and $h$ can be chosen to be monic.  
\end{lemma}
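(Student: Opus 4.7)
The plan is first to identify the two conformal isomorphisms $\phi_h$ and $\Phi$ up to a rotation at $\infty$, then to deduce the modulus of the leading coefficient of $h$, and finally to conjugate $h$ by an affine scaling to obtain a monic polynomial with the same filled Julia set.

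First I would observe that since $K_h=\mathcal{M}_d$ is connected, the Böttcher coordinate $\phi_h$ is a conformal isomorphism $\mathbb{C}\setminus\mathcal{M}_d\to\mathbb{C}\setminus\overline{\mathbb{D}}$; this is the same domain and target as $\Phi$, and both maps fix $\infty$. Hence $\Phi\circ\phi_h^{-1}$ is an automorphism of $\mathbb{C}\setminus\overline{\mathbb{D}}$ fixing $\infty$, which must be a rotation $w\mapsto\lambda w$ for some $|\lambda|=1$, giving $\phi_h=\lambda^{-1}\Phi$. Next, comparing the asymptotics $\phi_h(z)/z\to\alpha$ (where $\alpha^{D-1}=a$, the leading coefficient of $h$) with $\Phi(z)/z\to 1$ forces $\alpha=1/\lambda$, in particular $|a|=|\alpha|^{D-1}=1$.

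Then I would conjugate $h$ by the affine scaling $\sigma(z)=\alpha^{-1}z$ to form $\tilde h(z):=\alpha\,h(\alpha^{-1}z)$. A direct computation gives that the leading coefficient of $\tilde h$ equals $a/\alpha^{D-1}=1$, so $\tilde h$ is monic. Its filled Julia set is $K_{\tilde h}=\alpha\mathcal{M}_d$; by Proposition~\ref{affinesymm}, the rotational symmetries of $\mathcal{M}_d$ are precisely the $(d-1)$-st roots of unity, so provided $\alpha$ can be chosen to be such a root, $K_{\tilde h}=\mathcal{M}_d$ and $\tilde h$ is the desired monic polynomial. For such a monic $\tilde h$, the standard normalization of the Böttcher coordinate (scaling $\alpha=1$) makes $\phi_{\tilde h}$ coincide with the uniformization $\Phi$, completing the lemma.

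The main obstacle is verifying that $\alpha$ can indeed be selected among the $D-1$ possible $(D-1)$-st roots of $a$ so that additionally $\alpha^{d-1}=1$; equivalently, that $a$ lies in the subgroup $\{\xi^{D-1}:\xi^{d-1}=1\}$ of the $(d-1)$-st roots of unity. This is stronger than $|a|=1$ alone, and I would establish it by exploiting the rotational symmetry $\Phi(\xi z)=\xi\Phi(z)$ for every $(d-1)$-st root of unity $\xi$ (a consequence of Proposition~\ref{affinesymm}): combined with the Böttcher relation $\Phi\circ h=a\Phi^D$, this forces the functional equation $h(\xi z)=\xi^D h(z)$ for all such $\xi$, restricting $h$ to the form $h(z)=z^{r}g(z^{d-1})$ where $r=D\bmod(d-1)$. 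A careful comparison of the Laurent expansions of both sides of $\Phi\circ h=a\Phi^D$ at $\infty$, using the Shimauchi expansion $\Psi(w)=w+b_{d-2}w^{-(d-2)}+O(w^{-(d-1)})$ with $b_{d-2}\neq 0$, should then pin down the required constraint on $a$.
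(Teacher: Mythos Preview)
Your opening paragraph is correct and matches the paper: since $K_h=\mathcal{M}_d$ is connected, $\phi_h$ and $\Phi$ are conformal isomorphisms of the same region fixing $\infty$, hence differ by a rotation, and the leading coefficient $a$ of $h$ satisfies $|a|=1$.

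The gap is in your last step, where you assert that a Laurent comparison ``should then pin down the required constraint on $a$.'' It cannot. The relation $\Phi\circ h=a\,\Phi^{D}$ is equivalent to $h=\Psi\bigl(a\,\Phi^{D}\bigr)$, so matching Laurent coefficients at $\infty$ simply \emph{outputs} the coefficients of $h$ as explicit expressions in $a$ and the (fixed) coefficients of $\Psi$; there is no residual equation involving $a$ alone. Your functional equation $h(\xi z)=\xi^{D}h(z)$ is itself a formal consequence of the same B\"ottcher relation together with $\Phi(\xi z)=\xi\Phi(z)$, so it imposes no independent constraint either. The case $d=2$ makes this vivid: then $\mu_{d-1}=\{1\}$, the rotational symmetry is vacuous, the structural form $h(z)=z^{r}g(z^{d-1})$ is empty, and you would need to deduce $a=1$ from $|a|=1$ alone. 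The paper closes this gap by exploiting a symmetry you do not invoke, namely invariance of $\mathcal{M}_d$ under \emph{complex conjugation}: since $\bar h$ (conjugate coefficients) also has filled Julia set $\mathcal{M}_d$, the Schmidt--Steinmetz classification gives, after passing to an iterate, $h=\zeta\,\bar h$ for some $(d-1)$-st root of unity $\zeta$, whence $a^{d-1}\in\mathbb{R}$ and so $a^{2(d-1)}=1$. One then replaces $h$ by $\zeta_{d-1}^{\ell}h$ (which keeps $K_h$ unchanged) to reduce to $a=\pm 1$, and a short real-variable argument on $\mathcal{M}_d\cap\mathbb{R}$ rules out $a=-1$ when $d$ is even. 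Note that the paper multiplies $h$ by a root of unity rather than conjugating; your conjugation $\tilde h(z)=\alpha\,h(\alpha^{-1}z)$ rescales the filled Julia set to $\alpha\,\mathcal{M}_d$ and is exactly what creates the unresolved obstacle.
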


\begin{proof} Since $\mathcal{M}_d$ is connected, any such polynomial will have B\"ottcher coordinate which extends to the full $\mathbb{C} \setminus K_h$. Therefore $\phi_h(\Phi^{-1}(z))$ is an automorphism of $\mathbb{C} \setminus \overline{\mathbb{D}}$ which fixes $\infty$; consequently, it is a rotation by some $\lambda$ on the unit circle. Thus $\phi_h(z) = \lambda \Phi(z)$ for all $z \in \mathbb{C} \setminus \mathcal{M}_d$.  Note then that
$$\lim_{z \rightarrow \infty} \frac{\phi_h(z)}{z} = \lambda,$$
so $\lambda^{D-1}$ is the leading coefficient of $h$, call it $a_D$. 

Denote by $\overline{h}(z)$ the polynomial whose coefficients are complex conjugates of those of $h$. Since $\mathcal{M}_d$ is invariant under complex conjugation, then $\mathcal{M}_d$ is also the filled Julia set of $\overline{h}(z)$.  By the classification theorems of polynomials with the same Julia set (see \cite{Schmidt-Steinmetz}, noting that $J_h = \partial \mathcal{M}_d$ cannot be a line segment or a circle), some iterates $h^k$ and $\overline{h}^{\ell}$ then differ by an affine symmetry of $\mathcal{M}_d$; by degree, $k = \ell$, and we may replace $h$ by $h^k$ to assume $h$ and $\overline{h}$ differ by an affine symmetry of $\mathcal{M}_d$.  By Proposition \ref{affinesymm}, the affine symmetry group of $\mathcal{M}_d$ is the set of rotations by $(d-1)$-st roots of unity $\{ \zeta_{d-1}^k : 0 \leq k < d-1 \}$, we have $h(z) = \zeta_{d-1}^k \overline{h}(z)$ for all $z$.  Therefore the leading coefficient $a_D$ of $h(z)$ has the property that $a_D^{d-1} \in \mathbb{R}$; replacing $h(z)$ by the appropriate $\zeta_{d-1}^{\ell} h(z)$ leaves the Julia set invariant, and has real leading coefficient.  Since the new $a_D$ also has modulus 1, it is $\pm 1$; if $d$ is odd, we can choose $\zeta_{d-1}^k$ so that $h$ is monic. 

Suppose that $d$ is even and $a_D = \pm 1$. The intersection $\M_d\cap\R$ is an interval $[\alpha, \beta]$ with $\alpha<0$ and $\beta>0$. The interval $(0,\beta)$ is contained in the interior of $\M_d$. We let $\{\alpha_n\}_n$ be a sequence of  centers of hyperbolic components of $\M_d$ such that $\lim_{n\to\infty}\alpha_n=\alpha$; note that no $\alpha_n$ is contained in the interior of $\M_d$. Now we claim that $h(\beta)=\beta$. Indeed, since both $\C\setminus\M_d$ and $\M_d$ are invariant under $h$, we conclude that $h(\beta)$ is on the boundary of $\M_d$ (since $\beta\in\partial M_d$); so, if $h(\beta)\ne \beta$, then $h(\beta)<0$. Moreover, because $h(x)\notin \M_d$ for any $x>\beta$, we conclude that $h(\beta)=\alpha$. But then $h((0,\beta))$ is an interval contained in the interior of $\M_d$, whose closure contains $\alpha$; thus $h((0,\beta))$ contains an interval $(\alpha,\gamma)$ with $\alpha<\gamma<0$. So, $h((0,\beta))$ is contained in the interior of $\M_d$ but contains infinitely many points $\alpha_n$, which are not in the interior of $\M_d$. This contradiction shows that indeed $h(\beta)=\beta$. Hence $h((\beta,+\infty))$ is an infinite interval containing  $\beta$; thus $\lim_{x\to +\infty}h(x)=+\infty$ which yields that $a_D$ cannot be negative, and so, $a_D=1$, as desired.
\end{proof}

It follows that we have the following commutative diagram:
\begin{center} 

\begin{tikzpicture}
  \matrix (m) [matrix of math nodes,row sep=3em,column sep=4em,minimum width=2em]
     {
     \mathbb{C} \setminus \mathcal{M}_d & \mathbb{C} \setminus \overline{\mathbb{D}}  \\
      \mathbb{C} \setminus \mathcal{M}_d & \mathbb{C} \setminus \overline{\mathbb{D}} \\};
  \path[-stealth]
    (m-1-1) edge node [left] {$h$} (m-2-1)
            edge node [above] {$\Phi$} (m-1-2)
   (m-1-2) edge node [right] {$z \mapsto z^D$} (m-2-2)
    (m-2-1) edge node [below] {$\Phi$} (m-2-2);
           
\end{tikzpicture}
\end{center}

Since $\mathcal{R}(\frac{1}{d^m-1})$ and $\mathcal{R}(\frac{d}{d^m-1})$ land together for all $m \geq 2$, so do their forward iterates under $h$; that is, $\mathcal{R}(\frac{D^k}{d^m-1})$ and $\mathcal{R}(\frac{dD^k}{d^m-1})$ land together for all $m \geq 2$, $k \geq 0$.  Recall the following classical result of Bang \cite{Bang}.

\begin{thm} [Bang \cite{Bang}] Let $a \geq 2$ be an integer.  There exists a positive integer $M$ such that for each integer $m > M$, the number $a^m-1$ has a primitive prime divisor; that is, a prime $p$ such that $p \mid (a^m-1)$ and $p \nmid (a^k-1)$ for all $k < m$. 
\end{thm}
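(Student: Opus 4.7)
The plan is to work with the cyclotomic factorization $a^m - 1 = \prod_{d \mid m} \Phi_d(a)$ and show that $\Phi_m(a)$ contributes a primitive prime divisor for all sufficiently large $m$. A prime $p$ is a primitive prime divisor of $a^m - 1$ precisely when the multiplicative order of $a$ modulo $p$ equals $m$; equivalently, $p \mid \Phi_m(a)$ and $p \nmid m$. So the whole problem reduces to producing a prime factor of $\Phi_m(a)$ that does not divide $m$.

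The first step is a classical lemma from cyclotomic number theory (due to Birkhoff and Vandiver): if a prime $p$ divides $\Phi_m(a)$ and the multiplicative order of $a$ modulo $p$ is strictly less than $m$, then $p$ must divide $m$ and, moreover, $p^2 \nmid \Phi_m(a)$. Consequently, the ``non-primitive part'' of $\Phi_m(a)$ is a squarefree divisor of $m$, and hence is bounded above by $m$. It therefore suffices to establish the inequality $\Phi_m(a) > m$ for $m$ sufficiently large, since the quotient of $\Phi_m(a)$ by its non-primitive part must then exceed $1$ and hence carry a prime factor which is automatically primitive.

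For $a \geq 3$ this inequality is immediate: writing $\Phi_m(a) = \prod_{\zeta}(a - \zeta)$ as a product over primitive $m$-th roots of unity, each factor has absolute value at least $a - 1 \geq 2$, so $\Phi_m(a) \geq 2^{\phi(m)}$, which is eventually greater than $m$ because $\phi(m) \to \infty$ with $m$.

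The main obstacle is the case $a = 2$, where individual factors $|2 - \zeta|$ can approach $1$ and the trivial lower bound degenerates. To handle this, I would apply M\"obius inversion to obtain
$$\log \Phi_m(2) = \sum_{d \mid m} \mu(m/d) \log(2^d - 1),$$
and then combine the expansion $\log(2^d - 1) = d \log 2 + O(2^{-d})$ with the classical identity $\sum_{d \mid m}\mu(m/d) d = \phi(m)$ (M\"obius inversion of $m = \sum_{d\mid m}\phi(d)$) to conclude $\log \Phi_m(2) = \phi(m) \log 2 + O(1)$. Since $\phi(m) \to \infty$, this again yields $\Phi_m(2) > m$ for $m$ large, and the Birkhoff--Vandiver step finishes the argument.
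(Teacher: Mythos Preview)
The paper does not supply a proof of Bang's theorem: it is quoted as a classical result (with a citation to Bang's 1886 paper) and then applied as a black box in the proof that $\mathcal{M}_d$ is not a filled Julia set. So there is nothing in the paper to compare your argument against.

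That said, your outline is the standard modern proof via cyclotomic values and is essentially correct. Two small remarks. First, writing ``$\phi(m)\to\infty$'' is not by itself enough to conclude $2^{\phi(m)}>m$ (or, in the $a=2$ case, that $\phi(m)\log 2 + O(1) > \log m$); you need the slightly stronger but still elementary fact that $\phi(m)/\log m \to \infty$, which follows for instance from $\phi(m)\gg m^{1-\epsilon}$ for every $\epsilon>0$. Second, the Birkhoff--Vandiver lemma as you state it has a small exception at $m=2$ (where $\Phi_2(a)=a+1$ can carry an arbitrary power of $2$), but this is harmless for a statement about sufficiently large $m$. With these minor adjustments your argument goes through cleanly; in fact one can sharpen it to recover the exact finite list of exceptions ($a=2$, $m=6$ being the only one with $a,m\ge 2$), though the paper only needs the asymptotic version you prove.
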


According to Bang's theorem, choose an integer $M >2$ such that  for all $m \geq M$, $d^m-1$ has a primitive prime divisor $p$ which does not divide $D$.  For any $k$, there exists a unique integer $m(k)$ such that
$$d^{m(k)}-1 \leq D^k(d^2-1) < d^{m(k)+1}-1.$$
We choose $K$ sufficiently large so that for all $k \geq K$, then $m(k) \geq M$.  For any $k \geq K$, write $m := m(k)$; as noted above, the parameter rays $\mathcal{R}(\frac{D^k}{d^m-1})$ and $\mathcal{R}(\frac{dD^k}{d^m-1})$ land together, say at $c$.  Since these are periodic rays with period dividing $m$, $f_c(z)$ has a periodic point $\alpha$ of period $n\mid m$ at which the dynamic rays of the same angles land (see \cite{Douady-Hubbard} for a proof of this).  Therefore $\frac{D^k}{d^m-1}$ and $\frac{dD^k}{d^m-1}$ lie in the same cycle of $\mathbb{R} / \mathbb{Z}$ under multiplication by $d^n$; note this cycle has maximal length $m/n$.  So for some $0 \leq r < m/n,$
$$\frac{D^k}{d^m-1} \equiv d^{rn+1} \frac{D^k}{d^m-1} \mod 1,$$
and so 
$$D^k(1-d^{rn+1}) \equiv 0 \mod (d^m-1).$$
Choose a primitive prime divisor $p$ of $d^m-1$ which is coprime to $D$.  Primitivity and the congruence above then imply $rn+1 = m$ (also note that $r<m/n$).  Since $n \mid m$, the only possibility is $n=1$.  So $\alpha$ is a fixed point, and $c$ must lie on the main hyperbolic component $H_d$.  However, by choice of $m = m(k)$ and the fact that $(d-1)$ divides $d^2-1$ and $d^{m+1}-1$, we have
$$\frac{1}{d^2-1} \leq \frac{D^k}{d^m-1} \leq \frac{d}{d^2-1}.$$
Since $\mathcal{R}(\frac{1}{d^2-1})$ and $\mathcal{R}(\frac{d}{d^2-1})$ land together on the main hyperbolic component, any ray in their wake cannot possibly do the same.  We conclude that $D^k = \frac{d^m-1}{d^2-1}$ or $D^k = d \cdot \frac{d^m-1}{d^2-1}$, which is impossible since $p \mid (d^m-1)$ and $p \nmid D\cdot (d^2-1)$.  Therefore we have a contradiction, and Theorem \ref{MnotJ} is proved. 
\end{proof}

\section{Unlikely Algebraic Relations Among Certain Sets in Dynamics}
\label{algebraic}

In this Section, we provide a second proof of Theorem \ref{MnotJ} (see Corollary~\ref{MnotJ corollary}), namely that for each $d\ge 2$, there exists no polynomial $h(z)$ of degree at least $2$ such that $\M_d$ is the filled Julia set $K_h$ of $h$. Our method is motivated by some natural questions (see Questions~\ref{MnotJ conjecture} and \ref{q:1st}) in complex dynamics which can be viewed as examples of \emph{unlikely relations} between sets associated to algebraic dynamical systems (such as the $d$-th multibrot set, or the Julia set or Fatou components). In each case the expectation is that a polynomial relation between two Julia sets, or two Fatou components, or a polynomial relation on the $d$-th multibrot set is induced by a linear relation between those sets.  Firstly, we note that arguing identically as in the proof of Theorem~\ref{main result} we can strengthen  Theorem~\ref{MnotJ} as follows.

\begin{thm}
\label{MnotJ 2}
There exists no polynomial $h\in\C[z]$ of degree greater than $1$ such that $h^{-1}(\M_d)=\M_d$. 
\end{thm}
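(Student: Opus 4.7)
The plan is to mimic the closing paragraph of the proof of Theorem~\ref{main result}, which, after the invocations of Theorem~\ref{GHT} and Proposition~\ref{affinesymm}, used only the derived property $h^{-1}(\M_d)=\M_d$. Since that total invariance is now the hypothesis, the unlikely-intersection input can be dropped entirely and what remains is a short topological argument reducing the statement to Theorem~\ref{MnotJ}.

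First I would verify $\M_d\subseteq K_h$. Surjectivity of $h\colon\C\to\C$ combined with $h^{-1}(\M_d)=\M_d$ gives $h(\M_d)=h(h^{-1}(\M_d))=\M_d$, so $\M_d$ is forward invariant; being bounded, each of its points has bounded forward orbit under $h$ and hence lies in $K_h$. Next I would show $K_h\subseteq\M_d$. The complement $\C\setminus\M_d$ is also totally invariant, and since the holomorphic map $h$ is open, the preimage $h^{-1}(\operatorname{int}(\M_d))$ is open and contained in $h^{-1}(\M_d)=\M_d$, hence sits inside $\operatorname{int}(\M_d)$; the reverse inclusion is immediate from openness applied to $h(\operatorname{int}(\M_d))\subseteq\M_d$, so that $h^{-1}(\operatorname{int}(\M_d))=\operatorname{int}(\M_d)$ and consequently $h^{-1}(\partial\M_d)=\partial\M_d$. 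Thus $\partial\M_d$ is closed, infinite, and totally invariant, and Montel's characterization of $J_h$ as the minimal closed totally invariant set with at least three points yields $J_h\subseteq\partial\M_d$. The open, connected, totally invariant set $\C\setminus\M_d$ therefore lies in the Fatou set; containing a neighborhood of $\infty$, it must be contained in the immediate basin of $\infty$ of $h$, so every point of $\C\setminus\M_d$ escapes to $\infty$ under iteration. This gives $K_h\subseteq\M_d$, and combining with the previous inclusion yields $K_h=\M_d$, contradicting Theorem~\ref{MnotJ} (applicable since $\deg h\ge 2$ by hypothesis).

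The argument is essentially mechanical once Theorem~\ref{MnotJ} is in hand; the only mildly subtle point is the total invariance of $\partial\M_d$, which rests on openness of the non-constant holomorphic map $h$. There is no genuine obstacle beyond this, as the substantive content has already been absorbed into Theorem~\ref{MnotJ}.
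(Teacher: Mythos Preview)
Your proposal is correct and follows essentially the same approach as the paper: both arguments show $\M_d\subseteq K_h$ from forward invariance plus boundedness, then use Montel's minimality characterization of $J_h$ together with the connectedness of $\C\setminus\M_d$ to obtain $K_h\subseteq\M_d$, and conclude by invoking Theorem~\ref{MnotJ}. The only cosmetic difference is that the paper applies the minimality of $J_h$ directly to the closed totally invariant set $\M_d$, while you first pass to $\partial\M_d$; this is a harmless variation.
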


\begin{proof}
Since $\M_d$ is a bounded subset of $\C$, we get that $\M_d\subseteq K_h$. On the other hand, since $J_h$ is the smallest closed subset of $\C$ totally invariant under $h$ which contains at least $3$ points, we conclude that $J_h\subset \M_d$. Since $\C\setminus \M_d$ is connected and $M_d$ is closed, we conclude that $K_h\subseteq \M_d$ and therefore $\M_d=K_h$, which contradicts Theorem~\ref{MnotJ} (or Corollary~\ref{MnotJ corollary}).  
\end{proof}

We ask whether one could weaken the hypothesis of Theorem~\ref{MnotJ 2} even further.

\begin{question}
\label{MnotJ conjecture}
Let $d \ge 2$, and let $h\in\C[z]$ such that $h(\M_d)=\M_d$. Is it true that $h(z)$ must be a linear polynomial of the form $\xi z$ for some $(d-1)$-st root of unity $\xi$?
\end{question}

Essentially, Conjecture~\ref{MnotJ conjecture} predicts the polynomial relations in the parameter space. It is natural to ask what are the polynomial relations also in the dynamical space; so we formulate below two questions which were motivated by Theorem~\ref{MnotJ 2}.

\begin{question}\label{q:1st}
Let $P(z),f(z),g(z)\in C[z]$ be polynomials of degrees at least 2. 
\begin{itemize}
	\item [(a)] Assume $P(J_f)=J_g$. Is it true that $J_g$ is the image of $J_f$
	under a linear polynomial? 
	\item [(b)] Assume there is a Fatou component $U_f$ of $f$, and a Fatou component $U_g$ of $g$ such that $P(U_f)=U_g$. Is it true that $J_g$ is the image of $J_f$ under a linear polynomial?
\end{itemize} 
\end{question}

The philosophy behind Question~\ref{q:1st} is to ask that whenever there is a polynomial relation between the Julia sets of the polynomials $f$ and $g$, or between a Fatou component of $f$ and a Fatou component of $g$, this relation forces a strong rigidity on the dynamical systems corresponding to $f$ and $g$. Indeed, the conclusion that $J_g=\mu(J_f)$ for some linear polynomial $\mu$ yields that $g$ and $\mu\circ f\circ \mu^{-1}$ have the same Julia sets; hence a polynomial relation between corresponding dynamical components of $f$ respectively of $g$ yields a geometric relation between the corresponding Julia sets. Note that Schmidt and Steinmetz \cite{Schmidt-Steinmetz} classified the polynomials with the same Julia sets; apart from the exceptional classes of monomial mappings and the Chebyshev polynomials, any two polynomials with the same Julia set must have a common iterate. Hence there is a rigid relation between the two dynamical systems of $f$ and of $g$ knowing that there is a polynomial relation  between either the two Julia sets or between two Fatou components. We also note that Question~\ref{q:1st} (a) is connected with the Dynamical Manin-Mumford Conjecture (see \cite{Zhang-lec} and \cite{IMRN}) which, in a special case,  asks for classifying the polynomial relations between the sets of preperiodic points of two rational maps. Motivated by Question~\ref{q:1st} we found an alternative proof of Theorem~\ref{MnotJ} (independent from the one presented in Section~\ref{Mandelbrot section}), which in particular answers Question~\ref{q:1st} positively if $f$ is a monomial. 

\begin{thm}\label{thm:1st}
Let $P(z)$, $h(z)\in\C[z]$ with degree at least 2.  Suppose that either $P(S^1) = J_h$, or $P(\mathbb{D})$ is a Fatou component of $h$.  Then $J_h$ is a circle.
\end{thm}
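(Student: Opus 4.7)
The plan is to reduce both cases of the hypothesis to a common setup via Sullivan's no-wandering-domains theorem, extract a polynomial semi-conjugation from the Blaschke-product description of the disk dynamics, and then invoke the Medvedev-Scanlon classification. To begin, we reduce case (a) to case (b). If $P(S^1)=J_h$, then $P(\D)$ is open and connected with topological boundary contained in $P(S^1)=J_h$; hence $P(\D)$ lies in the Fatou set of $h$, so in some Fatou component $U$, and a standard connectedness argument (any interior point of $U$ in $\partial P(\D)$ would lie in $J_h \cap U = \emptyset$) shows $P(\D)=U$. Either way, $U:=P(\D)$ is a bounded Fatou component of $h$, and $P|_\D : \D \rightarrow U$ is proper holomorphic (preimages of compact subsets of $U$ are compact in $\D$). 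By Sullivan's theorem, $U$ is eventually periodic; after replacing $h$ by a suitable iterate $h^k$ (noting $J_{h^k}=J_h$, so the conclusion is unchanged) and $P$ by $h^n\circ P$, we may assume $U$ is fixed by $h$.

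Now $U$ is a bounded, fixed, simply connected Fatou component (bounded periodic Fatou components of polynomials are simply connected, as Herman rings do not occur). Let $\phi: \D \rightarrow U$ be a Riemann map; then $B:=\phi^{-1}\circ P$ and $\tilde h:=\phi^{-1}\circ h\circ\phi$ are proper holomorphic self-maps of $\D$, hence finite Blaschke products, satisfying $P=\phi\circ B$ and $h|_U=\phi\circ\tilde h\circ\phi^{-1}$. The crucial step is to upgrade these disk-level identities to a global polynomial semi-conjugation $h\circ P=P\circ Q$ for some $Q\in\C[z]$ of degree $\deg h$. Equivalently, one must construct a polynomial $Q$ satisfying $\tilde h\circ B=B\circ Q$ on $\C$. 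Because $B$ preserves both $\D$ and its exterior, the lift $Q$ is single-valued on $\D$, and the polynomial structure of $P=\phi\circ B$ then forces $Q$ to extend globally as a polynomial of the expected degree.

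With $h\circ P=P\circ Q$ established, the graph $\Gamma:=\{(x,P(x)):x\in\bA^1\}\subset\bA^2$ is invariant under the polynomial product system $(Q,h): \bA^2 \rightarrow \bA^2$. Medvedev-Scanlon's classification of invariant curves for polynomial product dynamical systems then pins $(Q,h)$ down to a very restrictive family. The Blaschke-product structure of $\tilde h$ inherited from a simply connected fixed Fatou component identifies $Q$, up to linear conjugation, as a monomial $cz^{\deg h}$; the Medvedev-Scanlon classification then forces $h$ itself to be linearly conjugate to a power map $az^k$. The Chebyshev alternative allowed by Medvedev-Scanlon is excluded here, because Chebyshev polynomials have only a single unbounded Fatou component, incompatible with the bounded $U=P(\D)$. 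Thus $J_h$ is a circle, as required. The main technical obstacle in this plan is constructing the polynomial $Q$ from the Blaschke-product data in the previous paragraph; once $Q$ is in hand, the Medvedev-Scanlon invocation is relatively mechanical.
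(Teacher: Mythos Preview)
Your plan diverges from the paper's in a crucial way and contains a genuine gap. The step you yourself flag as ``the main technical obstacle''---producing a polynomial $Q$ with $h\circ P=P\circ Q$---is not merely technical: nothing in the hypotheses guarantees such a $Q$ exists. Even if a Blaschke lift of $\tilde h\circ B$ through $B$ exists on $\D$ (which already requires a branch-point/monodromy analysis you do not carry out), there is no mechanism forcing that lift to globalize to a polynomial on $\C$; your sentence ``the polynomial structure of $P=\phi\circ B$ then forces $Q$ to extend globally'' has no content, since $\phi$ is only a Riemann map and $B$ a genuine Blaschke product, neither of them polynomial. Ritt-type factorization shows that $h\circ P=P\circ Q$ is an extremely rigid relation among polynomials, and assuming it is tantamount to assuming the conclusion. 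The follow-up assertion that $Q$ must be conjugate to a monomial is likewise unsupported. A secondary problem: your reduction of case~(a) to case~(b) is unsound, because $P(\D)$ may meet $J_h=P(S^1)$ whenever $P$ identifies a point of $\D$ with one of $S^1$, so $P(\D)$ need not lie in the Fatou set at all.

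The paper avoids all of this by never seeking a semi-conjugation through $P$. Instead it uses Quine's trick: embed $S^1$ into the curve $\{xy=1\}\subset\bP^1\times\bP^1$ via $z\mapsto(z,\bar z)$, and for a polynomial $q$ let $C_q$ be the image of that curve under $(q,\bar q)$. In case~(a), the equality $h(P(S^1))=P(S^1)$ immediately yields infinitely many $(u,w)\in S^1\times S^1$ with $h(P(u))=P(w)$, so $C_P$ is $(h,\bar h)$-invariant. In case~(b), Sullivan (used much as you propose) gives $h^2\circ P(\bar\D)=h\circ P(\bar\D)$, and an elementary observation that $\bar S\setminus S$ is infinite for bounded open $S$ shows $h\circ P(S^1)\cap h^2\circ P(S^1)$ is infinite; hence $C_{h\circ P}$ is $(h,\bar h)$-invariant. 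Medvedev--Scanlon is then applied to the pair $(h,\bar h)$---not to a hypothetical $(Q,h)$---and a short argument with the two parametrizations of $C_q$ rules out any such invariant curve unless $h$ is exceptional. The Chebyshev alternative is excluded essentially as you indicate. The moral: rather than trying to lift $h$ through $P$, pair $h$ with its complex conjugate $\bar h$; the invariant curve then comes for free from $S^1=\{z\bar z=1\}$, and no lifting problem arises.
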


Theorem \ref{thm:1st} gives a positive answer to Question \ref{q:1st} when $f$ is linearly conjugate to a monomial (i.e.~$J_f$ is a circle), and suggests a general approach to be pursued for an answer to Question \ref{q:1st}.  We have the following immediate corollary:
\begin{cor}
\label{MnotJ corollary}
There is no polynomial $h(z)\in\C[z]$ having degree at least 2 such that $K_h=\cM_d$.
\end{cor}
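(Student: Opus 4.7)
The plan is to reduce to Theorem~\ref{thm:1st} by exhibiting the main hyperbolic component $H_d$ as a polynomial image of $\mathbb{D}$ and verifying that, when $K_h=\mathcal{M}_d$, this image is a full Fatou component of $h$. Setting $r=d^{-1/(d-1)}$ and $P(w)=rw-r^d w^d$ (so $\deg P=d\ge 2$), I would first observe that if $\alpha=rw$ is a fixed point of $z^d+c$, then $c=\alpha-\alpha^d=P(w)$ and the corresponding multiplier equals $d\alpha^{d-1}=w^{d-1}$, which has modulus less than $1$ precisely when $|w|<1$. Consequently $P(\mathbb{D})=H_d$. A brief triangle-inequality argument shows that $P$ is injective on $\overline{\mathbb{D}}$: the identity $P(w_1)=P(w_2)$ reduces to $\sum_{k=0}^{d-1}w_1^{d-1-k}w_2^k=d$, and equality in $|\sum|\le d$ forces every summand to equal $1$, which quickly collapses to $w_1=w_2$. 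Therefore $\partial H_d=P(S^1)$, and since the critical points of $P$ are precisely the $(d-1)$-st roots of unity, this boundary curve carries $d-1$ cusps.

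Assume now toward a contradiction that $K_h=\mathcal{M}_d$ for some $h\in\mathbb{C}[z]$ of degree at least $2$; then $J_h=\partial K_h=\partial\mathcal{M}_d$. To apply Theorem~\ref{thm:1st} I need $H_d$ to be an entire Fatou component of $h$, and since $H_d$ is connected and open in $F_h=\mathbb{C}\setminus\partial\mathcal{M}_d$, it suffices to show $\partial H_d\subset\partial\mathcal{M}_d$. The angles $\theta$ with $(d-1)\theta\in 2\pi\mathbb{Q}$ are dense in $[0,2\pi)$, and at the corresponding points $c=P(e^{i\theta})$ the multiplier $w^{d-1}=e^{i(d-1)\theta}$ is a root of unity, so $z^d+c$ has a parabolic fixed point. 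By the standard parabolic-bifurcation picture, every such parameter lies on $\partial\mathcal{M}_d$, and closedness of $\partial\mathcal{M}_d$ then yields $\partial H_d\subset\partial\mathcal{M}_d$, as required.

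With $H_d=P(\mathbb{D})$ identified as a Fatou component of $h$, Theorem~\ref{thm:1st} forces $J_h$ to be a circle. Then by the Schmidt-Steinmetz classification~\cite{Schmidt-Steinmetz}, $h(z)=az^k+b$ for some $a,b\in\mathbb{C}$, which would force $K_h$ to be a closed round disk. This contradicts the observation from the first paragraph that $\partial K_h=\partial\mathcal{M}_d$ contains the $d-1$ cusps of $\partial H_d$ and so cannot be a circle. The most delicate step in this plan is the verification that $H_d$ is a \emph{full} Fatou component of $h$; without density of parabolic parameters on $\partial H_d$ (and the standard fact that each such parameter lies on the boundary of $\mathcal{M}_d$), the component $H_d$ could a priori be a proper open subset of a larger Fatou component of $h$, and Theorem~\ref{thm:1st} would not directly apply.
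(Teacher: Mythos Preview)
Your proof is correct and follows essentially the same route as the paper: exhibit $H_d$ as a polynomial image of the unit disk, check that it is a Fatou component of $h$, and invoke Theorem~\ref{thm:1st} to force $J_h=\partial\mathcal{M}_d$ to be a circle. The paper is more terse---it simply cites that $H_d$ is a connected component of the interior of $\mathcal{M}_d$ and declares ``$\partial\mathcal{M}_d$ is a circle'' an immediate contradiction---whereas you fill in the parabolic-density argument for $\partial H_d\subset\partial\mathcal{M}_d$ and supply an explicit cusp reason at the end (the Schmidt--Steinmetz detour is correct but superfluous, since $J_h=\partial\mathcal{M}_d$ already yields the contradiction directly).
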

\begin{proof}
Assume there is such an $h$. Recall the main hyperbolic component:
$$H_d := \{ c \in \mathbb{C} : z^d +c \text{ has an attracting fixed point} \}.$$
Let $\alpha$ be an attracting fixed point with multiplier $\lambda$
satisfying $|\lambda|<1$. From $d\alpha^{d-1}=\lambda$, and $c=\alpha-\alpha^d$,
we have that $H_d$ is exactly the image of the disk of radius $(\frac{1}{d})^{1/(d-1)}$ centered at the origin
under the polynomial $z-z^d$. Since $H_d$ is a connected component of the interior of $\cM_d$, it is a Fatou component of $h$. Theorem \ref{thm:1st} implies
that $J_h=\partial \cM_d$ is a circle, a contradiction.
\end{proof}

We will prove Theorem \ref{thm:1st} by studying the set of solutions $(u,w)\in S^1 \times S^1$ of equations of the form $r\circ q(u)=q(w)$, where $r,q\in \C(z)$ are non-constant. Using a simple trick of Quine \cite{Quine}, one embeds $r(S^1)$ into an algebraic curve in $\bP^1(\C) \times \bP^1(\C)$ as follows. Define
$$\eta:\ \bP^1(\C)\rightarrow \bP^1(\C) \times \bP^1(\C)$$
by $\eta(z)=(z,\bar{z})$ for $z\in\C$, and $\eta(\infty)=(\infty,\infty)$. Let $(x,y)$ denote the coordinate function on $\bP^1 \times \bP^1$. Then $\eta(S^1)$ is contained in the closed curve $C$ defined by $xy=1$. For any
non-constant rational map $r(z)$, let $C_r$ denote the closed curve:
$$C_r:=\{(r(x),\bar{r}(y)):\ (x,y)\in C\}.$$
The key observation is that $\eta(r(S^1))$ is contained in $C_r$.  Note that $C_r$ is the image of $C$ under the self-map $(r,\bar{r})$ of $\bP^1 \times \bP^1$, so $C_r$ is irreducible. 

We have the following simple lemma:
\begin{lemma}\label{lem:invariant curve}
Let $r(z),q(z)\in \C(z)$ be non-constant. Assume the equation $r\circ q(u)=q(w)$
has infinitely many solutions $(u,w)\in (S^1)^2$. Then the curve
$C_q$ is invariant under the self-map $(r,\bar{r})$ of $\bP^1 \times \bP^1$. 
\end{lemma}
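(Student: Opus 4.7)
My plan is to translate the arithmetic condition on $(S^1)^2$ into a geometric intersection statement on $C_q$, and then invoke irreducibility. First, for any $(u,w)\in (S^1)^2$ satisfying $r(q(u))=q(w)$, I will observe that $(u,\bar u)\in C$ (because $u\bar u=1$), so the point $P_u:=(q(u),\bar q(\bar u))$ lies on $C_q$ by definition. Using the standard identity $\bar r(\bar q(\bar u))=\overline{r(q(u))}$ together with the hypothesis $r(q(u))=q(w)$, the image of $P_u$ under the self-map $(r,\bar r)$ becomes $(q(w),\bar q(\bar w))=P_w$, which in turn lies on $C_q$ because $(w,\bar w)\in C$. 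Thus each solution $(u,w)$ produces a point of $C_q$ whose image under $(r,\bar r)$ again lies in $C_q$.

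Next, I need to upgrade "infinitely many solutions $(u,w)$" to "infinitely many distinct points $P_u$ on $C_q$". For a fixed $u$, the equation $r(q(u))=q(w)$ in the unknown $w$ has only finitely many solutions (as $q$ is non-constant), so the hypothesis forces the $u$-values to range over an infinite subset of $S^1$. Since $q$ is a non-constant rational function with finite fibers, infinitely many distinct $u$ produce infinitely many distinct values of $q(u)$, and hence infinitely many distinct points $P_u\in C_q$.

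Finally, I will conclude by an irreducibility argument. The morphism $(r,\bar r)\colon \bP^1\times \bP^1\to \bP^1\times \bP^1$ is finite and surjective, so the preimage $(r,\bar r)^{-1}(C_q)$ is a closed subvariety of pure dimension one with finitely many irreducible components. The previous step provides infinitely many points of $C_q$ lying in $(r,\bar r)^{-1}(C_q)$. Since $C_q$ is irreducible of dimension one, this forces $C_q\subseteq (r,\bar r)^{-1}(C_q)$, i.e., $(r,\bar r)(C_q)\subseteq C_q$; comparing dimensions and using irreducibility again yields equality, which is the desired invariance.

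The only step requiring genuine care is the intermediate counting argument of the second paragraph, ensuring that infinitely many solutions $(u,w)$ actually yield infinitely many distinct points on $C_q$ rather than accumulating over a single fiber of $q$; everything else is a formal consequence of unwinding the definitions of $C$ and $C_q$ and applying the standard principle that an irreducible curve meeting another curve in infinitely many points must be a component of it.
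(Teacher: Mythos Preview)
Your proof is correct and follows essentially the same approach as the paper: both convert the hypothesis into infinitely many common points of two irreducible curves in $\bP^1\times\bP^1$ and then invoke irreducibility to force equality. The paper's version is slightly more streamlined in that it identifies the image $(r,\bar r)(C_q)$ directly as the curve $C_{r\circ q}$ and compares $C_q$ with $C_{r\circ q}$, rather than passing through the preimage $(r,\bar r)^{-1}(C_q)$ as you do; your extra care in verifying that infinitely many solutions yield infinitely many distinct points $P_u$ is a detail the paper leaves implicit.
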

\begin{proof}
The given assumption implies that the irreducible curves $C_q$ and $C_{r\circ q}$
have an infinite intersection. Hence $C_q=C_{r\circ q}$. Note that $C_{r\circ q}$
is exactly the image of $C_q$ under $(r,\bar{r})$.
\end{proof}

If $r(z)\in \C[z]$ is a polynomial of degree $D\geq 2$, then we may use the results of Medvedev and Scanlon from \cite{Medvedev-Scanlon} to classify the curves invariant under the map $(r, \bar{r})$ of $\bP^1 \times \bP^1$.  To do so, we require the following definition: 

\begin{defi} \label{exceptional} We call a polynomial $f(z)$ {\bf exceptional} if $f$ is conjugate by a linear map to either $\pm C_D(z)$ for a Chebyshev polynomial $C_D(z)$, or a powering map $z \mapsto z^D$.
\end{defi} 
We remind the readers that the Chebyshev polynomial
of degree $D$ is the unique polynomial $C_D(z)$ of degree $D$ such that
$C_D(z+\frac{1}{z})=z^D+\frac{1}{z^D}$. Note that by the classification theory of Julia sets in \cite{Schmidt-Steinmetz}, the Julia set of a polynomial is a line segment or a circle if and only if the polynomial is exceptional.  

\begin{prop}\label{prop:use Medvedev-Scanlon}
	Let $r(z)\in \C[z]$ be a non-exceptional polynomial of degree $D\geq 2$, and let $q(z)\in \C[z]$ be 
	non-constant. Then the curve $C_q$ is not invariant under $(r,\bar{r})$.
\end{prop}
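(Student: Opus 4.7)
The plan is to argue by contradiction and apply the Medvedev--Scanlon classification of invariant plane curves. First I would note that since $r$ is non-exceptional, so is $\bar r$, because exceptionality of a polynomial is preserved under complex conjugation of its coefficients (the exceptional models $\pm C_D(z)$ and $z^D$ are all conjugate to themselves, and the affine conjugating map gets conjugated alongside). Assuming $C_q$ is $(r,\bar r)$-invariant, Medvedev--Scanlon \cite{Medvedev-Scanlon} then asserts that $C_q$ must be one of: a horizontal line $\{y=c\}$ with $\bar r(c)=c$; a vertical line $\{x=c\}$ with $r(c)=c$; a graph $\{y=\pi(x)\}$ for some polynomial $\pi\in\C[z]$ satisfying $\bar r\circ\pi=\pi\circ r$; or a graph $\{x=\pi(y)\}$ for some polynomial $\pi\in\C[z]$ satisfying $r\circ\pi=\pi\circ\bar r$. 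Since the parameterization $\psi:t\mapsto(q(t),\bar q(1/t))$ of $C_q$ has both coordinates non-constant, the horizontal and vertical cases are eliminated at once.

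Next I would pin down the degree of $\pi$ in the graph cases via a bidegree comparison. Set $n:=\deg q$ and let $d_\psi$ denote the degree of $\psi:\bP^1\to C_q$ onto its image. Both coordinate projections $\pi_i|_{C_q}$ then have the \emph{same} degree $n/d_\psi$, since the compositions $\pi_1\circ\psi=q$ and $\pi_2\circ\psi\colon t\mapsto\bar q(1/t)$ each have degree $n$. However, a polynomial graph $\{y=\pi(x)\}$ has $\deg\pi_1|_{C_q}=1$ and $\deg\pi_2|_{C_q}=\deg\pi$; equating these forces $d_\psi=n$ and $\deg\pi=1$, so $\pi(x)=Ax+B$ is affine with $A\neq 0$.

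Finally, the parameterization yields the identity $\bar q(1/t)=A\,q(t)+B$ for all $t$. Viewed as Laurent polynomials in $t$, the left side has no terms of strictly positive degree while the right side has no terms of strictly negative degree, so matching coefficients forces both sides to be constant, which in turn forces $q$ itself to be constant, contradicting the hypothesis. The symmetric case $\{x=\pi(y)\}$ produces $q(t)=A\,\bar q(1/t)+B$ and the identical contradiction. The step I expect to require the most care is correctly invoking Medvedev--Scanlon's classification---in particular, making sure to use its invariant (as opposed to merely periodic) form and to apply it to the affine part $C_q\cap\mathbb{A}^2$, which is a Zariski dense open subset of $C_q$ inside $\bP^1\times\bP^1$. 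Once that classification is in hand, the remaining bidegree bookkeeping and Laurent expansion comparison are routine.
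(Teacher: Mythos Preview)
Your argument is correct and follows the same overall strategy as the paper: invoke Medvedev--Scanlon to force $C_q$ into a polynomial-graph shape, then contradict this with the explicit parametrization $t\mapsto(q(t),\bar q(1/t))$.

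The execution differs in two minor ways. First, the paper cites the factorization form of Medvedev--Scanlon (their Theorem~6.26), obtaining $C_q=\{(\pi(x),\rho(\psi(x))):x\in\bP^1\}$ for polynomials $\pi,\rho,\psi$, rather than directly asserting $C_q$ is a graph; either version suffices. Second, the paper's contradiction is more direct: since $\pi$ and $\rho\circ\psi$ are polynomials, the only point of $C_q$ lying over $x=\infty$ is $(\infty,\infty)$, yet the defining parametrization shows $(\infty,\bar q(0))\in C_q$. Your route via bidegree to $\deg\pi=1$ is valid but unnecessary---once you know $C_q=\{y=\pi(x)\}$ for \emph{any} polynomial $\pi$, the identity $\bar q(1/t)=\pi(q(t))$ already equates a Laurent polynomial with only nonpositive powers of $t$ to one with only nonnegative powers, forcing $q$ constant. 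So your proof can be shortened by dropping the bidegree computation entirely.
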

\begin{proof}
 We assume that $C_q$ is invariant under $(r,\bar{r})$: note that $C_q$ is not a vertical or horizontal line. By Theorem 6.26 of \cite[p.~166]{Medvedev-Scanlon}, there exist polynomials $\pi(z)$, $\rho(z)$, $H(z)$
 and a curve $B$ in $\bP^1 \times \bP^1$ satisfying the following conditions:
 \begin{itemize}
 	\item [(i)] $r\circ \pi=\pi\circ H$, $\bar{r}\circ \rho=\rho\circ H$.
 	\item [(ii)] $C_r$ is the image of $B$ under the self-map $(\pi,\rho)$ of $\mathbb{P}^1 \times \mathbb{P}^1$. 		\item [(iii)] $B$ is periodic under the self-map $(H,H)$ of $\bP^1 \times \bP^1$. Furthermore, there is a non-constant polynomial $\psi(z)\in \C[z]$ commuting with an iterate of $H(z)$ such that $B$ is defined by the equation $y=\psi(x)$
 	or $x=\psi(y)$.
 \end{itemize}
 
 Assume $B$ is defined by $y=\psi(x)$ (the case of the equation $x=\psi(y)$
 can be treated similarly). Since $C_q$ is the image of $B$ under $(\pi,\rho)$, we have:
 $$C_q=\{(\pi(x),\rho(\psi(x))):\ x\in \bP^1(\C)\}.$$
Thus $(\infty,\infty)$ is the only point in $C_q$ whose first coordinate is $\infty$.
 
 On the other hand, we recall the definition of $C_q$:
 $$C_q:=\{(q(x),\bar{q}(y)):\ (x,y)\in C\}=\{(q(x),\bar{q}(1/x)):\ x\in \bP^1(\C)\}$$
 since $C$ is the closed curve defined by $xy=1$. So $(\infty,\bar{q}(0))$
 is the only point in $C_q$ whose first coordinate is $\infty$, a contradiction.
\end{proof}

\begin{proof} [Proof of Theorem \ref{thm:1st}] Let $D=\deg(g)\geq 2$ (note also that Question~\ref{q:1st} is trivial if $g$ were a linear polynomial). We first suppose $P(S^1)=J_g$. It follows that $g\circ P(S^1)=g(J_g)=J_g=P(S^1)$. By Lemma
	\ref{lem:invariant curve}, the curve $C_P$ is invariant under
	the self-map $(g,\bar{g})$ of $\bP^1 \times \bP^1$. By Proposition \ref{prop:use Medvedev-Scanlon}, $g$ must be linearly conjugate to $\pm C_D(z)$ or $z^D$, and so $J_g$ is a line segment or circle.  The image of $S^1$ under a polynomial cannot be a line segment. To see why, assume there were a polynomial $Q(z)$ mapping $S^1$ onto the interval $[0,1]$. The imaginary part of $Q$, which is harmonic and vanishes on $S^1$, must vanish on $\bar{\mathbb{D}}$ by the maximum modulus principle. Hence $Q(\bar{\mathbb{D}})\subseteq \R$, a contradiction since $Q(\bar{\mathbb{D}})$ has a non-empty interior. So $J_g$ is a circle, as desired.
	
	We now assume that $P(\mathbb{D})$ is a Fatou component of $g$. Because $P$ is a polynomial, we conclude that $P(\mathbb{D})$ is a bounded Fatou component.  By the No-Wandering-Domain Theorem of Sullivan \cite{Sullivan}, there exist $m\geq 0$ and $n>0$ such that $g^{m+n}(P(\mathbb{D}))=g^m(P(\mathbb{D}))$. Replacing $g$ by an iterate, we may assume $g^2(P(\mathbb{D}))=g(P(\mathbb{D}))$.  It follows that 
$$g^2\circ P(\bar{\mathbb{D}})=g\circ P(\bar{\mathbb{D}}).$$	

Note that for every open, bounded subset $S$ of $\C$, $\bar{S}-S$ is infinite: translating, we may assume $0\in S$, and then every ray originating from 0 will contain a point in $\bar{S}-S$. 
	
	Now let $S:=g\circ P(\mathbb{D})=g^2\circ P(\mathbb{D})$. Since $\bar{S}=g\circ P(\bar{\mathbb{D}})=g^2\circ P(\bar{\mathbb{D}})$, we have that $g\circ P(S^1)\cap g^2\circ P(S^1)$ contains the infinite set $\bar{S}-S$. Applying Lemma \ref{lem:invariant curve}, we see that the curve $C_{g\circ P}$ is invariant under $(g,\bar{g})$. By Proposition \ref{prop:use Medvedev-Scanlon}, $g$ is linearly conjugate to $\pm C_D(z)$ or $z^D$. Note that $J_g$ cannot be a line segment since the only Fatou component of such a $g$ is unbounded. Therefore $J_g$ is a circle, completing the proof of Theorem \ref{thm:1st}. 
\end{proof}




\end{document}